\theoremstyle{plain}
\newtheorem{thm}{Theorem}[section]
\newtheorem{lem}[thm]{Lemma}
\newtheorem{cor}[thm]{Corollary}
\newtheorem{prop}[thm]{Proposition}
\theoremstyle{definition}
\newtheorem{defn}[thm]{Definition}
\newtheorem{problem}[thm]{Problem}
\theoremstyle{remark}
\newtheorem{rem}[thm]{Remark}
\newcommand{\Eq}[1]{\eqref{eq:#1}}
\def\rom#1{\mbox{\leavevmode\skip@\lastskip\unskip\/%
           \ifdim\skip@=\z@\else\hskip\skip@\fi{\rm{#1}}}}
\def\ps@pprintTitle{%
     \let\@oddhead\@empty
     \let\@evenhead\@empty
     \def\@oddfoot{%
       \hfill\thepage \hfill}%
     \let\@evenfoot\@oddfoot}
\newcommand{\D}{\mathbb{D}}
\newcommand{\N}{\mathbb{N}}
\newcommand{\R}{\mathbb{R}}
\newcommand{\cD}{\mathcal{D}}
\newcommand{\cH}{\mathcal{H}}
\newcommand{\cL}{\mathcal{L}}
\newcommand{\cP}{\mathcal{P}}
\newcommand{\cS}{\mathcal{S}}
\newcommand{\fB}{\mathfrak{B}}
\newcommand{\fM}{\mathfrak{M}}
\newcommand{\gm}{\gamma}\newcommand{\dl}{\delta}
\newcommand{\eps}{\varepsilon}
\newcommand{\te}{\theta}
\newcommand{\sg}{\sigma}
\newcommand{\ph}{\varphi}
\newcommand{\Gm}{\Gamma}
\newcommand{\Om}{\Omega}
\newcommand{\bone}{\mathbf{1}}
\newcommand{\dv}{\mathop{\mathrm{div}}\nolimits}
\newcommand{\vol}{\mathop{\mathrm{vol}}\nolimits}
\newcommand{\diam}{\mathop{\mathrm{diam}}\nolimits}
\newcommand{\la}{\langle}
\newcommand{\ra}{\rangle}
\newcommand{\FCb}{\mathcal{F}C_b}
\newcommand{\nab}{\nabla}
\newcommand{\de}{\partial}
\numberwithin{equation}{section}
\begin{document}

\begin{frontmatter}



\title{Sets of finite perimeter and the Hausdorff--Gauss measure\\ on the Wiener space\tnoteref{t1}}
\tnotetext[t1]{This study was supported by a Grant-in-Aid for Young Scientists (B) (18740070, 21740094).}

\author{Masanori Hino}
\ead{hino@i.kyoto-u.ac.jp}
\address{Graduate School of Informatics,
Kyoto University,
Kyoto 606-8501, Japan}
\begin{abstract}
In Euclidean space,
the integration by parts formula for a set of finite perimeter is
expressed by the integration with respect to a type of surface measure.
According to geometric measure theory, this surface measure is realized by the
one-codimensional Hausdorff measure restricted on the reduced boundary and/or the measure-theoretic boundary, which may be strictly smaller than the topological boundary.
In this paper, we discuss the counterpart of this measure in the abstract Wiener space, which is a typical infinite-dimensional space.
We introduce the concept of the measure-theoretic boundary in the Wiener space and provide the integration by parts formula for sets of finite perimeter. 
The formula is presented in terms of the integration with respect to the
one-codimensional Hausdorff--Gauss measure restricted on the measure-theoretic boundary.
\end{abstract}

\begin{keyword}
Wiener space\sep set of finite perimeter\sep Hausdorff--Gauss measure\sep geometric measure theory

\MSC 28C20 \sep 60H07 \sep 28A75 \sep 28A78
\end{keyword}

\end{frontmatter}


\section{Introduction}
The concept of functions of bounded variation on a domain of $\R^m$ is a fundamental concept in  geometric measure theory.
Let $U$ be a domain of $\R^m$. 
By definition, a real-valued Lebesgue integrable function $f$ on $U$ has {\em bounded variation} if
\[
\sup\left\{\left.\int_U (\dv G)f\,dx\,\right|\,
G\in C_c^1(U\to\R^m),\ |G(x)|_{\R^m}\le1
\mbox{ for all }x\in U\right\}
<\infty,
\]
where $C_c^1(U\to\R^m)$ denotes the set of all $\R^m$-valued functions $G$ on $U$ such that $G$ is continuously differentiable and $G$ vanishes outside a certain compact subset of $U$, and $|\cdot|_{\R^m}$ denotes the Euclidean norm on $\R^m$.
One of the basic properties of a function $f$ of bounded variation on $U$ is that there exist a positive Radon measure $\nu$ on $U$ and a measurable function $\sg\colon U\to\R^m$ such that $|\sg(x)|_{\R^m}=1$ $\nu$-a.e.\,$x$ and
\begin{equation}\label{eq:ibp}
  \int_U (\dv G)f\,dx=-\int_U \la G,\sg\ra_{\R^m}\,d\nu
  \quad\mbox{for all }G\in C_c^1(U\to\R^m),
\end{equation}
where $\la \cdot,\cdot\ra_{\R^m}$ denotes the standard inner product on $\R^m$.
This follows directly from the Riesz representation theorem.
Roughly speaking, we can say that $f$ has an $\R^m$-valued measure $\sg\,d\nu$ as the weak gradient.
A Lebesgue measurable subset $A$ of $U$ is called a {\em set of finite perimeter} or sometimes a {\em Caccioppoli set} in $U$ if the indicator function $\bone_A$ of $A$ has bounded variation on $U$.
Then, Eq.~\Eq{ibp} is rewritten as
\begin{equation}\label{eq:ibp2}
  \int_A \dv G\,dx=-\int_{\partial A} \la G,\sg\ra_{\R^m}\,d\nu
  \quad\mbox{for all }G\in C_c^1(U\to\R^m),
\end{equation}
since the support of $\nu$ is proved to be a subset of the topological boundary $\partial A$ of $A$.
When $A$ is a bounded domain with a smooth boundary, Eq.~\Eq{ibp2} is identical to the Gauss--Green formula, and $\sg$ and $\nu$ are expressed as the unit inner normal vector field on $\partial A$ and the surface measure on $\partial A$, respectively.
Although $\partial A$ is not smooth in general, the deep theorem known as the structure theorem in geometric measure theory guarantees that $A$ has a ``measure-theoretical $C^1$-boundary.''
To state this claim more precisely, let us define the {\em reduced boundary} $\partial^\star A$ of $A$, which is a subset of $\partial A$, by the set of all points $x\in\R^m$ such that
\begin{enumerate}
\item $\nu(B(x,r))>0$ for all $r>0$;
\item $\displaystyle\lim_{r\to0}\frac{1}{\nu(B(x,r))}\int_{B(x,r)}\sg\,d\nu=\sg(x)$;
\item $|\sg(x)|_{\R^m}=1$.
\end{enumerate}
Here, $B(x,r)=\{y\in\R^m\mid |y-x|_{\R^m}\le r\}$.
Further, the {\em measure-theoretic boundary} $\partial_\star A$ of $A$ is defined as
\begin{equation}\label{eq:mtb}
\partial_\star A=\left\{x\in\R^m\left|\,
\limsup_{r\to0}\frac{\cL^m(B(x,r)\cap A)}{r^m}>0
\mbox{ and }
\limsup_{r\to0}\frac{\cL^m(B(x,r)\setminus A)}{r^m}>0\right\}\right.,
\end{equation}
where $\cL^m$ is the $m$-dimensional Lebesgue measure.
Then, the following theorems hold.
\begin{thm}[Structure theorem]\label{th:structure}
\begin{enumerate}
\item The measure $\nu$ is identified by the $(m-1)$-dimen\-sional (in other words, one-codimensional) Hausdorff measure $\cH^{m-1}$ restricted on $\partial^\star A$.
\item $\partial^\star A$ is decomposed as $\partial^\star A=\bigcup_{i=1}^\infty C_i\cup N$, where $\nu(N)=0$ and each $C_i$ is a compact subset of some $C^1$-hypersurface $S_i$ $(i=1,2,\dots)$; moreover, $\sg|_{C_i}$ is normal to $S_i$ $(i=1,2,\dots)$.
\end{enumerate}
\end{thm}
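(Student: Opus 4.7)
The plan is to follow De Giorgi's blow-up technique, the classical route to this structure theorem. The geometric heuristic is that at every point $x\in\partial^\star A$ the rescaled sets $(A-x)/r$ should converge in $L^1_{loc}(\R^m)$ as $r\to 0$ to the half-space $H_x=\{y\in\R^m\mid\la y,\sg(x)\ra_{\R^m}\le 0\}$, so that $A$ has an approximate tangent half-space at $x$ with inner normal $\sg(x)$. Once this blow-up is in hand, parts (i) and (ii) follow from density comparison and rectifiability arguments, respectively.

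First I would prove the blow-up convergence. The main analytic inputs are: (a) $L^1_{loc}$-compactness for sets of equibounded perimeter, combined with lower semicontinuity of the perimeter functional; (b) the relative isoperimetric inequality, yielding for $x\in\partial^\star A$ the dimensional lower bounds $\cL^m(B(x,r)\cap A)\ge c r^m$, $\cL^m(B(x,r)\setminus A)\ge c r^m$, and $\nu(B(x,r))\ge c r^{m-1}$ for small $r$; and (c) a matching upper bound $\nu(B(x,r))\le C r^{m-1}$ derived from the monotonicity-type information encoded in condition (ii) of the definition of $\partial^\star A$. Extracting a subsequential limit $E$ of the rescaled sets, one checks that the averaging condition (ii) forces its generalized inner normal to be the constant vector $\sg(x)$, so that $E$ must be the half-space $H_x$.

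With the blow-up established, part (i) reduces to a density computation: at every $x\in\partial^\star A$ one deduces
\[
  \lim_{r\to 0}\frac{\nu(B(x,r))}{\om_{m-1}r^{m-1}}=1,
\]
where $\om_{m-1}$ denotes the volume of the unit ball in $\R^{m-1}$. A Besicovitch/Vitali-type differentiation argument then yields $\nu=\cH^{m-1}\lfloor\partial^\star A$. One still needs $\nu(\R^m\setminus\partial^\star A)=0$; this is handled by applying the Lebesgue--Besicovitch differentiation theorem to the $\R^m$-valued measure $\sg\,d\nu$ and thereby verifying that $\nu$-almost every point of the support of $\nu$ satisfies conditions (i)--(iii) of the definition of $\partial^\star A$.

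For part (ii), the blow-up supplies, at every $x\in\partial^\star A$, an approximate tangent hyperplane orthogonal to $\sg(x)$. Together with the two-sided density estimates above, this makes $\partial^\star A$ countably $(m-1)$-rectifiable, and a Whitney-type extension argument decomposes it as $\bigcup_{i=1}^\infty C_i\cup N$ with each $C_i$ compact in some $C^1$-hypersurface $S_i$ and $\nu(N)=0$; the normality of $\sg|_{C_i}$ to $S_i$ is inherited from the identification of the tangent plane. The hard part is the blow-up step itself, specifically obtaining the simultaneous lower and upper density bounds uniformly on small balls and identifying the limit as exactly a half-space; once this is secured, parts (i) and (ii) follow from standard differentiation and rectifiability machinery.
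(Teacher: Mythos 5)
The paper itself gives no proof of this theorem: it is stated as classical background from geometric measure theory, with the proof deferred to the cited references \cite{EG,Gi}. Your outline is exactly the De Giorgi blow-up argument carried out in those references (density estimates via the relative isoperimetric inequality, identification of the blow-up limit as a half-space, Besicovitch differentiation for part (i), rectifiability plus Whitney-type extension for part (ii)), so it matches the proof the paper relies on; the only slip is a sign convention---since the paper's $\sigma$ is the \emph{inner} normal, the blow-up limit should be $\{y\in\R^m \mid \langle y,\sigma(x)\rangle_{\R^m}\ge 0\}$ rather than $\le 0$.
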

\begin{thm}\label{th:measuretheoretic}
The following relations hold: $\partial^\star A\subset \partial_\star A\subset \partial A$ and $\cH^{m-1}( \partial_\star A\setminus\partial^\star A)=0$; in particular, $\nu$ is also equal to $\cH^{m-1}$ restricted on $\partial_\star A$.
\end{thm}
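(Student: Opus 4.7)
The assertion has three parts -- the two set inclusions and the $\cH^{m-1}$-null statement -- which require quite different arguments. The inclusions are short; the null-set claim is the substance.

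For $\partial^\star A\subset\partial_\star A$, I would invoke De Giorgi's blow-up theorem: for $x\in\partial^\star A$ the rescaled sets $(A-x)/r$ converge in $L^1_{\mathrm{loc}}$ as $r\to 0$ to the half-space $H=\{y\in\R^m\mid\la y,\sg(x)\ra_{\R^m}\le 0\}$. A rescaling computation then gives
\[
\lim_{r\to 0}\frac{\cL^m(B(x,r)\cap A)}{r^m}=\lim_{r\to 0}\frac{\cL^m(B(x,r)\setminus A)}{r^m}=\frac{\cL^m(B(0,1))}{2},
\]
both of which are strictly positive; so $x\in\partial_\star A$. For $\partial_\star A\subset\partial A$, note that a point $x$ in the topological interior of $A$ satisfies $B(x,r)\subset A$ for all small $r$, so $\cL^m(B(x,r)\setminus A)=0$, violating the second condition in \Eq{mtb}; a point in the interior of $\R^m\setminus A$ violates the first; hence every $x\in\partial_\star A$ lies in $\partial A$, independently of the chosen Lebesgue representative.

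The main step is $\cH^{m-1}(\partial_\star A\setminus\partial^\star A)=0$. The key tool is the relative isoperimetric inequality on balls: there exists $c_m>0$ with
\[
\min\bigl(\cL^m(B(x,r)\cap A),\,\cL^m(B(x,r)\setminus A)\bigr)^{(m-1)/m}\le c_m\,\nu(B(x,r))
\]
for every $x$ and every sufficiently small $r$. For $x\in\partial_\star A$, a short intermediate-value argument applied to the monotone function $r\mapsto\cL^m(B(x,r)\cap A)$ produces $r_k\downarrow 0$ along which \emph{both} Lebesgue densities are bounded below by some $\eta=\eta(x)>0$. Feeding these radii into the isoperimetric inequality gives
\[
\limsup_{r\to 0}\frac{\nu(B(x,r))}{r^{m-1}}>0\qquad\text{for every }x\in\partial_\star A.
\]
Since Theorem~\ref{th:structure}(i) identifies $\nu$ with $\cH^{m-1}|_{\partial^\star A}$, we have in particular $\nu(\partial_\star A\setminus\partial^\star A)=0$. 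The standard density-comparison lemma between Hausdorff measure and a Radon measure of positive upper density then yields $\cH^{m-1}(\partial_\star A\setminus\partial^\star A)\le C\,\nu(\partial_\star A\setminus\partial^\star A)=0$, which is the desired null claim; the final assertion on $\nu$ follows by combining this with Theorem~\ref{th:structure}(i).

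The main obstacle is this last step, which relies on two non-elementary pieces of Euclidean geometric measure theory: the relative isoperimetric inequality and the Hausdorff-versus-Radon density lemma. Both are classical in this setting, so the proof is largely assembly; the real interest in the present paper is presumably to see which of these tools survives, or needs a suitable substitute, in the abstract Wiener-space context.
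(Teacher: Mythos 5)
Your argument is correct, and it is essentially the classical proof from Evans--Gariepy \cite{EG} (De Giorgi's blow-up theorem for $\partial^\star A\subset\partial_\star A$, the relative isoperimetric inequality plus the Radon-measure/Hausdorff density-comparison lemma for $\cH^{m-1}(\partial_\star A\setminus\partial^\star A)=0$), which is exactly the source the paper points to: the paper does not prove Theorem~\ref{th:measuretheoretic} itself but quotes it as a known Euclidean result, citing \cite{EG,Gi}. The only loose ends in your write-up are routine: the intermediate-value step really uses continuity of the normalized density ratio $r\mapsto\cL^m(B(x,r)\cap A)/r^m$ rather than monotonicity, and since your lower bound $\eta(x)$ is not uniform in $x$, the final comparison requires the usual decomposition of $\partial_\star A\setminus\partial^\star A$ into the countably many sets where the upper $\nu$-density exceeds $1/k$.
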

In this sense, the measure $\nu$ can be regarded as the surface measure on suitable boundaries of $A$.
See, e.g., \cite{EG,Gi} for the proof of these claims.
The proof is heavily dependent on the fact that the Lebesgue measure satisfies the volume-doubling property and that the closed balls in $\R^m$ are compact; the proof also requires effective use of covering arguments.

On the other hand, in \cite{Fu00a,FH01,Hi04,HU08}, a theory for functions of bounded variation on the abstract Wiener space, which is a typical infinite-dimensional space, has been developed in relation to stochastic analysis.
In this case, the whole space $E$ is a Banach space equipped with a Gaussian measure $\mu$ as an underlying measure, and the tangent space $H$ is a Hilbert space that is continuously and densely embedded in $E$, as in the framework of the Malliavin calculus.
Then, we can define the concepts of functions of bounded variation on $E$ and sets of finite perimeter in a similar manner, and thus, we can obtain integration by parts formulas that are analogous to \Eq{ibp} and \Eq{ibp2}.
The existence of the measure $\nu$ is proved by a version of the Riesz representation theorem in infinite dimensions. This type of Riesz theorem was proved in \cite{Fu99} by utilizing a probabilistic method together with the theory of Dirichlet forms and in \cite{Hi04} by using a purely analytic method. 
Since the construction of the measure $\nu$ is somewhat abstract, the geometric interpretation of $\nu$ associated with sets of finite perimeter has been unknown thus far.

In this article, we consider Borel sets $A$ of $E$ that have a finite perimeter and prove that the measure $\nu$ associated with $A$ as above, which is denoted by $\|A\|_E$ in this paper, is identified by the one-codimensional Hausdorff--Gauss measure restricted on the measure-theoretic boundary $\partial_\star A$ of $A$.
This Hausdorff--Gauss measure on the Wiener space has been introduced in \cite{FL92} (see also \cite{Fe01}) in order to discuss the coarea formula on the Wiener space and the smoothness of Wiener functionals. Further, for the first time,  the measure-theoretic boundary $\partial_\star A$ is introduced in this study as a natural generalization of that in Euclidean space.
This identification justifies the heuristic observation that $\|A\|_E$ can be considered as the surface measure of $A$.
Since Gaussian measures on $E$ do not satisfy the volume-doubling property and closed balls in $E$ are not compact when $E$ is infinite-dimensional, most techniques in geometric measure theory cannot be applied directly.
Instead, we adopt the finite-dimensional approximation and utilize some results from geometric measure theory in finite dimensions; this is a reasonable approach since both the Hausdorff--Gauss measure and the measure-theoretic boundary are defined as the limits of the corresponding objects of finite-dimensional sections.
The most crucial task in the proof of the main theorem (Theorem~\ref{th:main}) is to prove that the order of these two limits can be possibly interchanged in a certain sense. 
Since the limit in the definition of the measure-theoretic boundary is not monotone, this claim is not straightforward and the proof requires rather technical arguments.

The representation of $\|A\|_E$ by the Hausdorff--Gauss measure enables us to take advantage of the general properties of Hausdorff--Gauss measures
(\cite{FL92,Fe01}); we can deduce that $\|A\|_E$ does not charge any sets of zero $(r,p)$-capacity if $p>1$ and $rp>1$, where the $(r,p)$-capacity is defined in the context of the Malliavin calculus. In \cite{Hi04}, such a smoothness property was proved  by using a different method, and the similarity between this smoothness property and that of the one-codimensional Hausdorff--Gauss measure was pointed out.
Our results clarify this relationship further.

Surface measures in infinite dimensions have been studied in various frameworks and approaches, such as in \cite{Go72,Sk,Ku,He80,AM88,Bo90,FL92}D
For example, in the early study by \mbox{Goodman}~\cite{Go72}, surface measures and normal vector fields were provided explicitly for what are called $H$-$C^1$ surfaces in the Wiener space.
In the study by Airault and Malliavin~\cite{AM88}, the surface measures on the level sets of smooth and nondegenerate functions are realized by generalized Wiener functionals in the sense of Malliavin calculus.
In the paper by Feyel and de La Pradelle~\cite{FL92}, the Hausdorff--Gauss measures were introduced to represent the surface measures, which has a great influence on this article.
Although these apparently different expressions should be closely related one another, it does not seem evident to derive one formula from another one directly.
It would be an interesting problem to clarify such an involved situation.
In this study, in contrast to the preceding ones, the smoothness assumption is not explicitly imposed on the boundary of the set under consideration.
The author hopes that our study will be useful to develop geometric measure theory in infinite dimensions.

This paper is organized as follows. In section~2, we provide the framework as well as the necessary definitions and propositions and state the main theorem.
We provide the proof of this theorem in section~3.
In section~4, we present some additional results as concluding remarks.

\section{Framework and main results}
Henceforth, we denote the Borel $\sg$-field of $X$ by $\fB(X)$ for a topological space $X$.
Let $(E,H,\mu)$ be an abstract Wiener space. In other words, $E$ is a separable Banach space, $H$ is a separable Hilbert
space densely and continuously embedded in $E$, and $\mu$ is a
Gaussian measure on $(E,\fB(E))$ that satisfies
\[
  \int_E \exp\left(\sqrt{-1}\,l(z)\right)\mu(dz)=\exp\left(-|l|_H^2/2\right),
  \quad l\in E^*.
\]
Here, 
the topological dual space $E^*$ of $E$ is regarded as a subspace of $H$ by the natural inclusion $E^*\subset H^*$ and the identification 
$H^*\simeq H$. The inner product and the norm
of $H$ are denoted by $\la\cdot,\cdot\ra_H$ and $|\cdot|_H$, respectively.
We mainly deal with the case in which both $E$ and $H$ are infinite-dimensional.
However, if necessary, many concepts discussed below can be easily modified such that they are valid even in the finite-dimensional case.

Denote by $\fM(E)$ the completion of $\fB(E)$ by $\mu$.
We define the following function spaces:
\begin{align}
\FCb^1(E)&=\left\{u\colon E\to\R\left|\,\begin{array}{ll} u(z)=f(h_1(z),\ldots,h_n(z)),\ h_1,\ldots,h_n\in E^*,\\ 
f\in C_b^1(\R^n)\mbox{ for some }n\in\N \end{array}\!\!\right.\right\},\nonumber\\
\FCb^1(E\to X)&=\mbox{the linear span of }\{u(\cdot)l\mid u\in \FCb^1(E),\ l\in X\},
\label{eq:fcb2}
\end{align}
for a Banach space $X$.
Here, $C_b^1(\R^n)$ is the set of all bounded continuous functions on $\R^n$ that have continuous bounded derivatives.
For a separable Hilbert space $X$ and $f\in\FCb^1(E\to X)$, the $H$-derivative of $f$, denoted by $\nab f$, is a map from $E$ to $H\otimes X$ defined by the relation
\[
  \la\nab f(z),l\ra_H =(\de_l f)(z) \quad\mbox{for all } l\in E^*\subset H,
\]
where 
\[
(\de_l f)(z)=\lim_{\eps\to0}(f(z+\eps l)-f(z))/\eps, 
\quad l\in E^*\subset H\subset E.
\]
For each $G\in \FCb^1(E\to E^*)$, the (formal) adjoint $\nab^* G$ is defined by the following identity:
\[
  \int_E (\nab^* G) f\,d\mu = \int_E \la G,\nab f\ra_H \,d\mu
  \qquad\mbox{for all }f\in\FCb^1(E).
\]
\begin{defn}[\cite{FH01}]\label{def:BV}
We say that a real-valued $\fM(E)$-measurable function $f$ on $E$ is of {\em bounded variation} ($f\in BV(E)$) if $\int_E|f|((\log |f|)\vee0)^{1/2}\,d\mu<\infty$ and
\[
  V_E(f):=\sup_G \int_E (\nab^* G)f\,d\mu
\]
is finite, where $G$ is taken over all functions in $\FCb^1(E\to E^*)$ such that $|G(z)|_H\le 1$ for every $z\in E$.
A subset $A$ of $E$ is said to have a {\em finite perimeter} if the indicator function $\bone_A$ of $A$ belongs to $BV(E)$.
We denote $V_E(\bone_A)$ by $V_E(A)$.
\end{defn}

One of the basic theorems concerning the functions of bounded variation is the following:
\begin{thm}[{\cite[Theorem~3.9]{FH01}}]\label{th:FH}
  For each $f\in BV(E)$, there exists a finite Borel measure $\nu$ on $E$ and an $H$-valued Borel measurable function $\sg$ on $E$ such that $|\sg|_H=1$ $\nu$-a.e.\ and
\begin{equation}\label{eq:bv}
 \int_E(\nab^* G)f\,d\mu=\int_E \la G,\sg\ra_H \,d\nu
 \quad\mbox{for every }G\in \FCb^1(E\to E^*).
\end{equation}
 Also, $\nu$ and $\sg$ are uniquely determined in the following sense: if $\nu'$ and $\sg'$ are different from $\nu$ and $\sg$ and also satisfy relation \Eq{bv}, then $\nu=\nu'$ and $\sg(z)=\sg'(z)$ for $\nu$-a.e.\,$z$.
\end{thm}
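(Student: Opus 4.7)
The statement is an infinite-dimensional Riesz-type representation. Setting $L(G) := \int_E (\nab^* G) f \, d\mu$ for $G \in \FCb^1(E \to E^*)$, the $BV$ hypothesis gives $|L(G)| \le V_E(f) \sup_{z \in E} |G(z)|_H$, so $L$ is a linear form of norm at most $V_E(f)$ on $\FCb^1(E \to E^*)$ equipped with the uniform $H$-norm. My plan is to realize $L$ as integration against a unit $H$-valued vector field $\sg$ with respect to a finite Borel measure $\nu$ on $E$.

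The natural strategy is finite-dimensional approximation, since $E$ is neither locally compact nor volume-doubling. Fix a complete orthonormal system $(l_i)_{i \ge 1}$ of $H$ with each $l_i \in E^*$, let $\mathcal{F}_n$ denote the $\sg$-field generated by $l_1, \ldots, l_n$, and set $f_n := \E[f \mid \mathcal{F}_n]$. A direct computation using the tower property and the definition of $\nab^*$ shows that $f_n$, viewed on $\R^n$ via $\pi_n := (l_1, \ldots, l_n)$, is of bounded variation with respect to the standard Gaussian measure $\gm_n$, with total variation at most $V_E(f)$. The classical finite-dimensional Riesz representation then yields finite Borel measures $\tilde\nu_n$ on $\R^n$ and unit Borel vector fields $\tilde\sg_n$ realizing this finite-dimensional problem; pulling them back via $\pi_n$ produces candidate measures $\nu_n$ on $E$ and $H_n$-valued Borel vector fields $\sg_n$ with $|\sg_n|_H = 1$ $\nu_n$-a.e., where $H_n := \mathrm{span}\{l_1, \ldots, l_n\}$.

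The main obstacle is controlling the limit. I would first establish tightness of $\{\nu_n\}$ on $E$: the uniform mass bound $\nu_n(E) \le V_E(f)$ is immediate, but concentration on compact subsets of $E$ requires combining Gaussian concentration (Fernique's theorem) with the integrability hypothesis $\int_E |f| ((\log|f|) \vee 0)^{1/2} \, d\mu < \infty$ built into Definition~\ref{def:BV}, which exists precisely to control boundary mass along high Cameron--Martin directions. After extracting a weakly convergent subsequence $\nu_n \to \nu$ together with a weak limit $\sg \, d\nu$ of the $H$-valued measures $\sg_n \, d\nu_n$, the identity $L(G) = \int_E \la G, \sg \ra_H \, d\nu$ holds for cylindrical $G$ by reducing to the finite-dimensional formula, and extends to all of $\FCb^1(E \to E^*)$ by density. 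Approximating $\sg / |\sg|_H$ by cylindrical test fields and comparing against $|L(G)| \le V_E(f)$ forces $|\sg|_H = 1$ $\nu$-a.e., with $\nu(E) = V_E(f)$.

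For uniqueness, if $(\nu', \sg')$ is a second representing pair then the $H$-valued Borel measure $\sg \, d\nu - \sg' \, d\nu'$ annihilates $\FCb^1(E \to E^*)$. A standard monotone-class argument, using that cylindrical functions generate $\fB(E)$ and the class is closed under products, shows this $H$-valued measure vanishes identically, which yields $\nu = \nu'$ (by comparing total variations) and $\sg = \sg'$ $\nu$-a.e.
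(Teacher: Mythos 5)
Your strategy (cylindrical conditional expectations $f_n=\E[f\mid\sg(l_1,\dots,l_n)]$, the classical finite-dimensional Riesz/structure theorem, then a passage to the limit) has several sound ingredients: the bound $V_E(f_n)\le V_E(f)$ does follow from a Jensen-type argument, the representation for cylindrical $G$ does reduce to finite dimensions, and your closing arguments for $|\sg|_H=1$ $\nu$-a.e.\ and for uniqueness (component-wise monotone class) are essentially the standard ones. But there is a genuine gap at the central step: the extraction of a weakly convergent subsequence from $\{\nu_n\}$. Since $E$ is an infinite-dimensional Banach space it is not locally compact, so the uniform mass bound $\nu_n(E)\le V_E(f)$ gives no compactness at all; Prokhorov's theorem requires \emph{uniform} tightness, i.e., for each $\eps>0$ a single compact $K\subset E$ with $\sup_n\nu_n(E\setminus K)<\eps$. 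Each $\nu_n$ is individually Radon (Ulam's theorem), but your measures are carried by subspaces of growing dimension, and nothing you say rules out the mass escaping ``across dimensions.'' The two sources you invoke do not supply this: Fernique's theorem concerns the Gaussian measure $\mu$, not the perimeter measures $\nu_n$, and the integrability condition $\int_E|f|((\log|f|)\vee 0)^{1/2}\,d\mu<\infty$ in Definition~\ref{def:BV} is not there to ``control boundary mass along high Cameron--Martin directions'' --- in \cite{FH01} its role is to make the Ornstein--Uhlenbeck semigroup approximation work (via hypercontractivity it guarantees $\nab T_tf\in L^1$ and the convergence of the variations). Establishing the compactness you need is precisely the hard part of the theorem, not a routine verification.

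This is also where your route departs from the proof the paper relies on. Theorem~\ref{th:FH} is imported from \cite[Theorem~3.9]{FH01}, whose existence part avoids weak compactness entirely: for each direction $h_i$ one first represents the scalar functional $g\mapsto\int_E\nab^*(g\,h_i)f\,d\mu$ by a signed Borel measure $D^i$ on $E$, using an infinite-dimensional Riesz-type representation theorem (proved probabilistically via Dirichlet forms in \cite{Fu99}, and analytically via vector lattices in \cite{Hi04}); the pair $(\nu,\sg)$ is then assembled from the $D^i$ by Radon--Nikodym differentiation with respect to $\Delta=\sum_i|D^i|$. This mechanism is visible in the present paper's proof of Proposition~\ref{prop:VFK}, which explicitly adapts ``the first part of the proof of \cite[Theorem~3.9]{FH01}.'' The point of the component-wise construction is that every candidate measure is built directly on $E$, so the non-compactness of $E$ never has to be confronted; your scheme instead concentrates all of that difficulty into the unproven tightness claim, and as written the proof does not go through.
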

There is no minus sign on the right-hand side of \Eq{bv}, in contrast to \Eq{ibp}; this minus sign is included in the definition of $\nab^*$. 
For an $A\in\fM(E)$ that has finite perimeter, the $\nu$ and $\sg$ associated with $f:=\bone_A$ in the theorem above are denoted by $\|A\|_{E}$ and $\sg_A$, respectively.
Then, it is proved that the support of $\|A\|_{E}$ is included in the topological boundary $\de A$ of $A$ in $E$.
In other words, \Eq{bv} is rewritten as follows: for every $G\in \FCb^1(E\to E^*)$,
\begin{equation}\label{eq:bv2}
 \int_A(\nab^* G)\,d\mu=\int_{\partial A} \la G,\sg_A\ra_H \,d\|A\|_E.
 \end{equation}
A more detailed assertion has been presented in \cite[Theorem~3.15]{FH01}.

In order to state the main theorem in this paper, we introduce the concept of the Hausdorff--Gauss measure on $E$, essentially following the procedure in \cite{FL92,Fe01}.
We begin with the finite-dimensional case.
Let $F$ be an $m$-dimensional subspace of $E^*\,(\subset H)$ with $m\ge1$.
By including the inner product induced from $H$ in the subspace $F$, we regard $F$ as an $m$-dimensional Euclidean space.
Let $A$ be a (not necessarily Lebesgue measurable) subset of $F$.
For $\eps>0$, we set
\begin{equation}\label{eq:SFeps}
  \cS_{F,\eps}^{m-1}(A)=\inf_{\{B_i\}_{i=1}^\infty }\sum_{i=1}^\infty \vol_{m-1}(B_i),
\end{equation}
where $\{B_i\}_{i=1}^\infty$ is taken over all countable coverings of $A$ such that each $B_i$ is an {\em open ball} of diameter less than $\eps$, and
\[
  \vol_{m-1}(B_i)=V_{m-1}\cdot\left(\frac{\diam(B_i)}2\right)^{m-1},\quad
  V_{m-1}=\frac{\pi^{(m-1)/2}}{\Gm((m-1)/2+1)}.
\]
Note that $V_{m-1}$ is equal to the volume of the unit ball in $\R^{m-1}$.
Then, define
\[
  \cS_F^{m-1}(A)=\lim_{\eps\downarrow0}\cS_{F,\eps}^{m-1}(A).
\]
$\cS_F^{m-1}$ is called the $(m-1)$-dimensional (or one-codimensional) {\em spherical Hausdorff measure} on $F$.
$\cS_F^{m-1}$ is an outer measure on $F$ and a measure on $(F,\fB(F))$.
We do not use the standard Hausdorff measure $\cH_F^{m-1}$ (namely, the measure obtained by removing the restriction that $B_i$ is an open ball in \Eq{SFeps}) for the reason explained in the remark that follows Proposition~\ref{prop:nondecreasing} below.
The spherical Hausdorff measure and the Hausdorff measure differ on some pathological Borel sets but coincide on good sets that are considered in this study.
For further details on this assertion, we refer to \cite[Section~2.10.6, Corollary~2.10.42, Theorem~3.2.26]{Fed}.

The one-codimensional Hausdorff--Gauss measure $\te_F^{m-1}$ on $F$ is defined as
\begin{equation}\label{eq:HG}
  \te_F^{m-1}(A)=\int_A^* (2\pi)^{-m/2}\exp\left(-\frac{|x|_{\R^m}^2}{2}\right)\cS_F^{m-1}(dx),\quad
  A\subset F.
\end{equation}
Here, $\int^*$ denotes the outer integral for the  case in which $A$ is not measurable.
Note that we adopt a terminology different from \cite{FL92,Fe01}.
$\te_F^{m-1}$ is also an outer measure on $F$ and a measure on $(F,\fB(F))$.

We now consider the infinite-dimensional case.
Let $F$ be a finite-dimensional subspace of $E^*$, and let $m=\dim F$.
Define a closed subspace $\tilde F$ of $E$ by 
\[
\tilde F=\{z\in E\mid x(z)=0 \text{ for every } x\in F\subset E^*\}.
\]
Then, $E$ is decomposed as a direct sum $F\dotplus\tilde F$, where $F$ is regarded as a subspace of $E$.
The canonical projection operators from $E$ onto $F$ and $\tilde F$ are denoted by $p_F$ and $q_F$, respectively.
In other words, they are given by
\[
  p_F(z)=\sum_{i=1}^m h_i(z)h_i,\quad
    q_F(z)=z-p_F(z),
\]
where $\{h_1,\ldots,h_m\}\subset E^*\subset H\subset E$ is an orthonormal basis of $F$ in $H$.
Let $\mu_F$ and $\mu_{\tilde F}$ be the image measures of $\mu$ by $p_F$ and $q_F$, respectively.
The measure space $(E,\mu)$ can be identified by the product measure space $(F,\mu_F)\times(\tilde F,\mu_{\tilde F})$.
We define $\fM(F)$ as the completion of $\fB(F)$ by $\mu_{F}$, and $\fM(\tilde F)$ as the completion of $\fB(\tilde F)$ by $\mu_{\tilde F}$.

For $A\subset E$ and $y\in\tilde F$, the {\em section} $A_y$ is defined as
\begin{equation}\label{eq:section}
A_y=\{x\in F\mid x+y\in A\}.
\end{equation}
Define
\[
\cD_F=\{A\subset E\mid \mbox{the map }\tilde F\ni y\mapsto \te_F^{m-1}(A_y)\in[0,\infty]\mbox{ is $\fM(\tilde F)$-measurable}\}
\]
and
\begin{equation}\label{eq:rhoF}
\rho_F(A)=\int_{\tilde F}\te_F^{m-1}(A_y)\,\mu_{\tilde F}(dy)\quad
\mbox{for }A\in\cD_F.
\end{equation}
Then, we have the following propositions.
\begin{prop}[cf.\ {\cite[Proposition~3]{FL92} or \cite[Corollary~2.3]{Fe01}}]\label{prop:suslin}
Every Suslin set of $E$ belongs to $\cD_F$, and $\rho_F$ is a measure on $(E,\fB(E))$.
\end{prop}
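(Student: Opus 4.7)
The plan is to establish the measurability of $y\mapsto\te_F^{m-1}(A_y)$ for every Suslin $A\subset E$ first, and then to deduce the countable additivity of $\rho_F$ from it. Following the strategy of \cite{FL92,Fe01}, I would proceed stepwise---open sets, then compact sets, then Suslin sets via Choquet's capacitability theorem.

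For open $U\subset E$, I would introduce the pre-limit outer measure $\te_{F,\eps}^{m-1}$ obtained by replacing $\cS_F^{m-1}$ with $\cS_{F,\eps}^{m-1}$ in \Eq{HG}; these increase to $\te_F^{m-1}$ as $\eps\downarrow 0$. For each fixed $\eps$, the map $y\mapsto\te_{F,\eps}^{m-1}(U_y)$ is lower semicontinuous: any compact subset of $U_{y_0}$ is eventually contained in $U_y$ as $y\to y_0$ (by openness of $U$), so a near-optimal cover of $U_y$ by open balls of diameter less than $\eps$ also covers any fixed compact subset of $U_{y_0}$, which bounds $\te_{F,\eps}^{m-1}(U_{y_0})$ from below by $\liminf_{y\to y_0}\te_{F,\eps}^{m-1}(U_y)$. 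Taking the supremum over $\eps$ yields Borel measurability of $y\mapsto\te_F^{m-1}(U_y)$. For compact $K\subset E$, the section $K_y$ is compact in $F$ and $y\mapsto K_y$ is upper semicontinuous in the Kuratowski--Hausdorff sense, so an optimal finite cover of $K_{y_0}$ by open balls of diameter less than $\eps$, slightly enlarged, covers $K_y$ for all $y$ near $y_0$; a diagonal argument in the ball-diameter parameter then yields Borel measurability of $y\mapsto\te_F^{m-1}(K_y)$. The restriction to \emph{open}-ball coverings in \Eq{SFeps} is essential in both of these steps.

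For Suslin $A\subset E$, I would invoke Choquet's capacitability theorem: $A$ admits a Suslin scheme by compact sets, and the inner-regularity identity $\te_F^{m-1}(A_y)=\sup\{\te_F^{m-1}(K_y):K\subset A,\ K\mbox{ compact in }E\}$ can be realized along a countable subfamily of compacts selected measurably from the scheme, so $y\mapsto\te_F^{m-1}(A_y)$ is a countable supremum of Borel functions (modulo a $\mu_{\tilde F}$-null set) and hence $\fM(\tilde F)$-measurable. The measure property of $\rho_F$ then follows quickly: for pairwise disjoint Borel $A_n\subset E$ with union $A$, each section $(A_n)_y$ is a Suslin subset of $F$, hence Carath\'eodory measurable for the metric outer measure $\te_F^{m-1}$, so $\te_F^{m-1}(A_y)=\sum_n\te_F^{m-1}((A_n)_y)$; integrating in $y$ and applying Tonelli yields $\rho_F(A)=\sum_n\rho_F(A_n)$. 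The main obstacle is the capacitability step: since $\te_F^{m-1}$ is not $\sigma$-finite in general and does not enjoy continuity under arbitrary monotone unions, the inner approximation by compacts has to be controlled uniformly in the parameter $y$ rather than fiber by fiber.
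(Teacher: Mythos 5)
First, a point of reference: the paper itself gives no proof of this proposition---it is quoted, with a ``cf.'', from \cite{FL92} (Proposition~3) and \cite{Fe01} (Corollary~2.3)---so your attempt can only be measured against those proofs. Your outline does follow their architecture: fixed-$\eps$ pre-measures built from open-ball coverings, semicontinuity of sections over compact sets, a Choquet capacitability argument for Suslin sets, and fiberwise additivity plus monotone convergence for the measure property. Two of your steps are essentially sound: the upper semicontinuity of $y\mapsto\te_{F,\eps}^{m-1}(K_y)$ for compact $K$ (a finite subcover by open balls absorbs nearby sections), and the final additivity step---though there you do not need the heavy fact that Suslin sections are Carath\'eodory measurable; for Borel $A_n$ the sections are Borel, and Borel sets are measurable for the metric outer measure $\te_F^{m-1}$, after which Tonelli applies once measurability in $y$ is known.

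The central step, however, is a genuine gap. You invoke ``Choquet's capacitability theorem'' without exhibiting a capacity. What must be shown is that, for each fixed $\eps$, the set function $\gamma_\eps(A):=\int^*_{\tilde F}\te_{F,\eps}^{m-1}(A_y)\,\mu_{\tilde F}(dy)$ is a Choquet capacity relative to the compact subsets of $E$: monotonicity and downward continuity along decreasing compacts come from your compact step, but the ``going up'' axiom along \emph{arbitrary} increasing sequences requires an increasing-sets lemma for the fixed-$\eps$ spherical pre-measure together with monotone convergence of outer integrals. This is not automatic: for fixed $\eps$ the pre-measure fails additivity on positively separated sets, so it is not a metric outer measure, Borel sets need not be Carath\'eodory measurable for it, and no Borel-regularity argument applies; this is exactly the technical heart of the cited proofs, and your closing sentence concedes the obstacle instead of resolving it. Your diagnosis also misplaces the difficulty: the full measure $\te_F^{m-1}$, being a Borel regular outer measure, \emph{is} continuous under increasing unions; what fails is downward continuity of the full measure on compacts and upward continuity of the pre-measures, which is precisely why neither object alone can serve as the capacity. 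The same confusion undermines your open-set step: lower semicontinuity of $y\mapsto\te_{F,\eps}^{m-1}(U_y)$ needs inner regularity by compacts of the \emph{pre-measure} on open sections, which is unjustified (and doubtful), whereas your argument is correct verbatim for the full measure $\te_F^{m-1}$, whose restriction to $\fB(F)$ is a measure and hence inner regular on the $\sigma$-compact open subsets of $F$. Finally, even granting capacitability, Choquet's theorem yields compacts $K_n\subset A$ with $\gamma_\eps(K_n)\uparrow\gamma_\eps(A)$, not a ``measurable selection from the Suslin scheme''; one must still sandwich $\te_F^{m-1}(A_y)$ between $\sup_n\te_F^{m-1}((K_n)_y)$ and a measurable majorant, interchanging the $\eps$-limit with the supremum over compacts, to obtain $\fM(\tilde F)$-measurability $\mu_{\tilde F}$-a.e.; that step, too, is missing.
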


Fix a sequence $\{l_i\}_{i=1}^\infty\subset E^*\,(\subset H)$ such that $\{l_i\}_{i=1}^\infty$ is a complete orthonormal system of $H$.
For $m\in\N$, let $F_m$ be an $m$-dimensional subspace of $E^*\,(\subset H\subset E)$ defined as
\begin{equation}\label{eq:Fm}
F_m=\mbox{the linear span of $\{l_1,\dots, l_m\}$}.
\end{equation}
Set $\cD=\bigcap_{m=1}^\infty \cD_{F_m}$. Note that $\cD$ contains all Suslin sets of $E$; in particular, $\cD\supset \fB(E)$.
\begin{prop}[{\cite[Proposition~6]{FL92} or \cite[Proposition~3.2]{Fe01}}]
\label{prop:nondecreasing}
For any $A\in\cD$, $\rho_{F_m}(A)$ is nondecreasing in $m$.
\end{prop}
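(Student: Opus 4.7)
The plan is to reduce the monotonicity to a finite-dimensional Fubini-type inequality between one-codimensional Hausdorff--Gauss measures. Since $\{l_i\}$ is orthonormal in $H$, we have $l_{m+1}\in\tilde F_m$, and $\tilde F_m$ admits the topological direct sum decomposition $\tilde F_m=\R l_{m+1}\oplus\tilde F_{m+1}$. By independence of Gaussian marginals on orthogonal subspaces, $\mu_{\tilde F_m}$ factorizes as $\gamma_1\otimes\mu_{\tilde F_{m+1}}$, where $\gamma_1$ is the standard Gaussian on $\R\cong\R l_{m+1}$. Writing $y=tl_{m+1}+\tilde y$ with $t\in\R$ and $\tilde y\in\tilde F_{m+1}$, the section $A_y\subset F_m$ coincides with the $t$-slice of the section $A_{\tilde y}\subset F_{m+1}$, namely $A_y=(A_{\tilde y})^t:=\{x\in F_m\mid x+tl_{m+1}\in A_{\tilde y}\}$. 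Applying Fubini in $\tilde F_m$ yields
\begin{equation*}
\rho_{F_m}(A)=\int_{\tilde F_{m+1}}\!\int_\R\theta_{F_m}^{m-1}\bigl((A_{\tilde y})^t\bigr)\,\gamma_1(dt)\,\mu_{\tilde F_{m+1}}(d\tilde y),
\end{equation*}
so it suffices to prove that, for every $C\subset F_{m+1}$,
\begin{equation*}
\int_\R^{*}\theta_{F_m}^{m-1}(C^t)\,\gamma_1(dt)\le\theta_{F_{m+1}}^m(C).
\end{equation*}

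I would establish this finite-dimensional inequality by a ball-slicing argument. Given $\eps>0$ and a countable cover $\{B_i\}$ of $C$ by open balls in $F_{m+1}$ of diameter less than $\eps$, with $B_i$ centred at $(x_i,t_i)\in F_m\oplus\R l_{m+1}$ and of radius $r_i$, the slice $B_i^t$ is either empty (when $|t-t_i|\ge r_i$) or an open ball in $F_m$ centred at $x_i$ of radius $r_i(t):=\sqrt{r_i^2-(t-t_i)^2}$. These slices cover $C^t$ and have diameter less than $\eps$, hence
\begin{equation*}
\cS_{F_m,\eps}^{m-1}(C^t)\le\sum_i V_{m-1}\,r_i(t)^{m-1}.
\end{equation*}
The key deterministic ingredient is the beta-integral identity $\int_\R V_{m-1}\,r_i(t)^{m-1}\,dt=V_m\,r_i^m=\vol_m(B_i)$, which matches the $m$-volume of $B_i$ exactly. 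Combining this with the factorization $\exp(-|(x,t)|^2/2)=\exp(-|x|^2/2)\exp(-t^2/2)$ of the Gaussian density on $F_{m+1}$, multiplying by $(2\pi)^{-1/2}e^{-t^2/2}$, and integrating over $t$, one sees that the weighted contribution of each slice family to the left-hand side is bounded, up to a factor $\sup_{B_i}\exp(-|\cdot|^2/2)/\inf_{B_i}\exp(-|\cdot|^2/2)$, by the $\theta_{F_{m+1}}^m$-weight of $B_i$. Because this factor tends to $1$ uniformly as $\diam B_i\to0$, taking the infimum over covers and letting $\eps\downarrow0$ closes the argument.

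Combining the two steps, integrating the finite-dimensional inequality with $C=A_{\tilde y}$ against $\mu_{\tilde F_{m+1}}(d\tilde y)$ produces $\rho_{F_m}(A)\le\rho_{F_{m+1}}(A)$; the measurability needed to justify the iterated integrals is provided by $A\in\cD\subset\cD_{F_m}\cap\cD_{F_{m+1}}$. I expect the main obstacle to be the finite-dimensional Fubini-type inequality: spherical Hausdorff measure does not split as a product, so classical Fubini is unavailable, and the non-constant Gaussian weight forces a diameter-controlled approximation of the density on each covering ball. The exact identity $\int V_{m-1}\,r(t)^{m-1}\,dt=V_m\,r^m$, which encodes why balls behave perfectly under one-codimensional slicing, is what makes the argument yield an optimal constant of $1$ rather than a dimensional constant as in the general Eilenberg inequality, and thus furnishes the sharp monotonicity rather than mere comparability.
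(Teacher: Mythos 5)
Your proposal reconstructs, essentially correctly in outline, the proof that the paper itself omits: for this proposition the paper only cites \cite[Proposition~6]{FL92} and \cite[Proposition~3.2]{Fe01}, remarking that the essential part is contained in \cite[Section~2.10.27]{Fed}. Your two main ingredients --- the Fubini reduction through $\tilde F_m=\R l_{m+1}\dotplus\tilde F_{m+1}$ with $A_y=(A_{\tilde y})^t$, and the fact that slices of balls are balls together with the exact identity $\int_{|t-t_0|<r} V_{m-1}\bigl(r^2-(t-t_0)^2\bigr)^{(m-1)/2}\,dt=V_m r^m$ --- are precisely the content of that reference, and you correctly identify this identity as the reason the spherical measure $\cS_F^{m-1}$, rather than the Hausdorff measure $\cH_F^{m-1}$, must be used.

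There is, however, one step that fails as written: the claim that $\sup_{B_i}\exp(-|\cdot|^2/2)\big/\inf_{B_i}\exp(-|\cdot|^2/2)$ tends to $1$ \emph{uniformly} as $\diam B_i\to0$. This is false on the unbounded space $F_{m+1}$: for a ball of radius $r$ centred at $x_0$ with $|x_0|\ge r$ the ratio equals $\exp(2r|x_0|)$, which is unbounded in $x_0$ for every fixed $r>0$. Since $C=A_{\tilde y}$ is in general an unbounded set, covers at scale $\eps$ contain balls far from the origin on which this factor is arbitrarily large, and your final limiting step does not close; moreover, relating the infimum of the weighted covering sums to $\te_{F_{m+1}}^m(C)=\int_C^*\gamma_{m+1}\,d\cS_{F_{m+1}}^m$ is itself a nontrivial comparison that your sketch assumes implicitly. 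The standard repair is to localize \emph{before} covering: prove first the unweighted inequality $\int_\R^*\cS_{F_m}^{m-1}(C^t)\,dt\le\cS_{F_{m+1}}^m(C)$ (fix $\eps$ and a cover, integrate the premeasure bound in $t$, take the infimum over covers, then let $\eps\downarrow0$ using that the premeasures increase monotonically together with monotone convergence for upper integrals --- note that for fixed $\eps$ one may only bound the premeasure of a slice, never its full measure, by a single-scale cover); then partition $C$ into countably many disjoint bounded Borel pieces $C_k$ whose diameters shrink as the pieces recede from the origin, so that on each piece the Gaussian density oscillates by a factor at most $1+\eta$. On each piece the density can be pulled out of both sides, giving $\int_\R^*\te_{F_m}^{m-1}\bigl((C_k)^t\bigr)\gamma_1(dt)\le(1+\eta)\,\te_{F_{m+1}}^m(C_k)$, and one concludes by summing in $k$, using countable subadditivity of the upper integral on the left and Carath\'eodory additivity of the metric outer measure $\te_{F_{m+1}}^m$ over disjoint Borel pieces on the right, and finally letting $\eta\downarrow0$. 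With this repair your argument is complete and coincides with the proof in the cited sources.
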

The essential part of the proof is contained in \cite[Section~2.10.27]{Fed}, where it is explained that such a monotonicity does not hold when we replace $\cS_F^{m-1}$ with the Hausdorff measure $\cH_F^{m-1}$ in \Eq{HG}.
From this proposition, we can define 
\[
\rho(A):=\lim_{m\to\infty}\rho_{F_m}(A)\quad
\mbox{for }A\in\cD.
\]
Then, $\rho$ is a (non-$\sg$-finite) measure on $(E,\fB(E))$.
Denote by $\fM^\rho(E)$ the completion of $\fB(E)$ by $\rho$.
\begin{prop}\label{prop:Mrho}
$\fM^\rho(E)\subset \cD$, and $\rho$ is a complete measure on $(E,\fM^\rho(E))$.
\end{prop}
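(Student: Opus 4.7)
The plan is to leverage the explicit form of the completion: every $A \in \fM^\rho(E)$ admits a representation $A = B\cup N$, with $B\in\fB(E)$ and $N$ contained in some $N'\in\fB(E)$ satisfying $\rho(N')=0$. I will show (i) every such $N$ lies in $\cD$ with $\rho_{F_m}(N)=0$ for all $m$, (ii) every such $A$ lies in $\cD$, and (iii) the formula $\rho(A)=\rho(B)$ is intrinsic. These three points together prove the inclusion $\fM^\rho(E)\subset\cD$ and identify the extended set function with the canonical measure-theoretic completion of $\rho$, which is automatically complete.

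The key input is the monotonicity from Proposition~\ref{prop:nondecreasing}. Since $\{\rho_{F_m}(N')\}_m$ is nondecreasing with limit $\rho(N')=0$, we get $\rho_{F_m}(N')=0$, that is,
\[
\int_{\tilde F_m}\te_{F_m}^{m-1}(N'_y)\,\mu_{\tilde F_m}(dy)=0,
\]
for every $m$. Hence $\te_{F_m}^{m-1}(N'_y)=0$ for $\mu_{\tilde F_m}$-a.e.\ $y$. Because $\te_{F_m}^{m-1}$ is an outer measure on $F_m$, monotonicity yields $\te_{F_m}^{m-1}(N_y)\le\te_{F_m}^{m-1}(N'_y)$ for every $y$ and every $N\subset N'$, so $\te_{F_m}^{m-1}(N_y)$ vanishes $\mu_{\tilde F_m}$-a.e.\ as well. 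The map $y\mapsto\te_{F_m}^{m-1}(N_y)$ is therefore $\fM(\tilde F_m)$-measurable by completeness of $\fM(\tilde F_m)$, which yields $N\in\cD_{F_m}$ for every $m$, i.e., $N\in\cD$.

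For a general $A=B\cup N$ as above, the sandwich $B_y\subset A_y\subset B_y\cup N'_y$ combined with $\te_{F_m}^{m-1}(N'_y)=0$ for a.e.\ $y$ forces $\te_{F_m}^{m-1}(A_y)=\te_{F_m}^{m-1}(B_y)$ for $\mu_{\tilde F_m}$-a.e.\ $y$. Since $B\in\fB(E)\subset\cD_{F_m}$ by Proposition~\ref{prop:suslin}, the right-hand side is $\fM(\tilde F_m)$-measurable in $y$, and by completeness of $\fM(\tilde F_m)$ the left-hand side is as well. Hence $A\in\cD_{F_m}$ for every $m$, so $A\in\cD$; moreover $\rho_{F_m}(A)=\rho_{F_m}(B)$ and consequently $\rho(A)=\rho(B)$.

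This last identity shows that the value $\rho(A)$ does not depend on the chosen decomposition (two representations differ only on $\rho$-null Borel sets), so the restriction of $\rho$ to $\fM^\rho(E)$ coincides with the canonical completion of the Borel measure $\rho$, which is a complete measure by construction. There is no serious obstacle here; the only substantive ingredient is the monotonicity of the \emph{outer} measure $\te_{F_m}^{m-1}$ applied to the sections $N_y$, which need not be Borel or $\te_{F_m}^{m-1}$-measurable, together with the completeness of $\fM(\tilde F_m)$ used to transfer measurability through a.e.\ equalities.
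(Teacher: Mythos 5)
Your proof is correct and follows essentially the same route as the paper's: apply Proposition~\ref{prop:nondecreasing} to a Borel $\rho$-null discrepancy set to get $\rho_{F_m}$-nullity for every $m$, use the definition \Eq{rhoF} (Fubini) to make the sections $\te_{F_m}^{m-1}$-null for $\mu_{\tilde F_m}$-a.e.\ $y$, transfer this to $A_y$ by monotonicity/subadditivity of the outer measure, invoke completeness of $\fM(\tilde F_m)$ for measurability, and conclude $\rho(A)=\rho(B)$. The only cosmetic difference is your use of the representation $A=B\cup N$ with $N$ inside a Borel null set instead of the paper's sandwich $B\subset A\subset C$ with $\rho(C\setminus B)=0$; these are equivalent descriptions of the completion.
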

This proposition is proved in the next section.
\begin{defn}[cf.\ {\cite[Definition~8]{FL92}, \cite[Definition~3.3]{Fe01}}]\label{def:Hausdorff}
We call $\rho$ the {\em one-codimensional Hausdorff--Gauss measure} on $E$.
\end{defn}
\begin{rem}
\begin{enumerate}
\item The measure $\rho$ may depend on the choice of $\{l_i\}_{i=1}^\infty$.
In the original studies~\cite{FL92,Fe01}, the supremum of $\rho(A)$ is taken over all possible choices of $\{l_i\}_{i=1}^\infty$ in order to define the one-codimensional Hausdorff--Gauss measure of $A$.
In this study, such a procedure is not carried out.
\item Similarly, for each $n\in\N$, we can define the $n$-codimensional Hausdorff--Gauss measure on $E$.
\end{enumerate}
\end{rem}
Next, we introduce the concept of the measure-theoretic boundary of a subset of $E$.
\begin{defn}\label{def:boundary}
Let $A$ be a subset of $E$ and let
$F$ be a finite-dimensional subspace of $E^*\,(\subset H)$. 
Denote $\dim F$ by $m$ and the $m$-dimensional Lebesgue outer measure on $F$ by $\cL^m$.
Define
\[
\partial_\star^F A:=\left\{ z\in E\,\left|
\begin{array}{l}\displaystyle
\limsup_{r\to0}\frac{\cL^m(B(p_F(z),r)\cap A_{q_F(z)})}{r^m}>0\mbox{ and }\\\displaystyle
\limsup_{r\to0}\frac{\cL^m(B(p_F(z),r)\setminus A_{q_F(z)})}{r^m}>0
\end{array}\!\!\right.\right\}.
\]
Here, $B(p_F(z),r)$ is a closed ball in $F$ with center $p_F(z)$ and radius $r$, and $A_{q_F(z)}$ is a section of $A$ at $q_F(z)$ that is defined as in \Eq{section}.
\end{defn}
For each $y\in\tilde F$, the relation $(\partial_\star^F A)_y=\partial_\star(A_y)$ holds, where the left-hand side is the section of $\partial_\star^F A$ at $y$ as in \Eq{section} and the right-hand side is the measure-theoretic boundary of $A_y$ as in \Eq{mtb}.
\begin{defn}
For $A\subset E$, the {\em measure-theoretic boundary} $\partial_\star A$ of $A$ is defined as
\[
\partial_\star A:=\liminf_{m\to\infty}\partial_\star^{F_m}A
=\bigcup_{n=1}^\infty\bigcap_{m=n}^\infty\partial_\star^{F_m}A.
\]
\end{defn}
It can be easily seen that $\partial_\star A$ is a subset of $\partial A$.
In general, the sequence $\{\partial_\star^{F_m}A\}_{m=1}^\infty$ is not monotone in $m$.
We also note that $\partial_\star A$ may depend on the choice of $\{l_i\}_{i=1}^\infty$; however, in the case of our study, the difference is negligible, as we infer from the comment that follows Theorem~\ref{th:main}.
\begin{prop}\label{prop:Borel}
If $A$ is a Borel subset of $E$, then $\partial_\star A$ is also a Borel set.
\end{prop}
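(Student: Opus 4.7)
The plan is to show that each $\partial_\star^{F_m} A$ is itself Borel; once this is done, the conclusion follows immediately from the identity
\[
\partial_\star A = \bigcup_{n=1}^\infty \bigcap_{m=n}^\infty \partial_\star^{F_m} A,
\]
which is a countable combination of Borel sets. So fix $m$, write $F = F_m$, and let
\[
\ph_A(z) := \limsup_{r\to0}\frac{\cL^m(B(p_F(z),r)\cap A_{q_F(z)})}{r^m},
\qquad
\ps_A(z) := \limsup_{r\to0}\frac{\cL^m(B(p_F(z),r)\setminus A_{q_F(z)})}{r^m}.
\]
Then $\partial_\star^F A = \{\ph_A > 0\}\cap\{\ps_A > 0\}$, so it suffices to prove that $\ph_A$ and $\ps_A$ are Borel measurable on $E$.

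The next step is to reduce the limsup to a countable operation. For every $z\in E$ and every subset $C$ of $F$, the function
\[
r \mapsto \cL^m(B(p_F(z),r)\cap C)
\]
is continuous on $(0,\infty)$, because the symmetric difference of two concentric closed balls is contained in a spherical shell whose Lebesgue measure tends to zero. Consequently the $\limsup_{r\to0}$ in the definitions of $\ph_A$ and $\ps_A$ may be restricted to positive rational $r$:
\[
\ph_A(z) = \inf_{n\in\N}\,\sup_{r\in\Q\cap(0,1/n)}\frac{\cL^m(B(p_F(z),r)\cap A_{q_F(z)})}{r^m},
\]
and the analogous identity for $\ps_A$. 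Therefore it is enough to prove that for each fixed $r>0$ the map $z\mapsto \cL^m(B(p_F(z),r)\cap A_{q_F(z)})$ is Borel on $E$, and similarly for the complement.

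The remaining step is a standard Fubini/Tonelli argument. Since $p_F,q_F\colon E\to E$ are continuous, and the addition map $(z,x)\mapsto x+q_F(z)$ from $E\times F$ to $E$ is continuous, the set $\{(z,x)\in E\times F : x+q_F(z)\in A\}$ is Borel whenever $A$ is Borel. The set $\{(z,x)\in E\times F : |x-p_F(z)|_H\le r\}$ is closed, hence Borel. The product of the two indicator functions is thus jointly Borel measurable on $E\times F$, and Tonelli's theorem then yields
\[
z\mapsto \int_F \bone_{B(p_F(z),r)}(x)\,\bone_A(x+q_F(z))\,\cL^m(dx) = \cL^m(B(p_F(z),r)\cap A_{q_F(z)})
\]
as a Borel function of $z\in E$. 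Replacing $\bone_A$ with $1-\bone_A$ gives the corresponding statement for $B(p_F(z),r)\setminus A_{q_F(z)}$. Combining this with the countable reduction above shows that $\ph_A$ and $\ps_A$ are Borel, hence $\partial_\star^F A$ is Borel, completing the proof. No serious obstacle is expected; the only subtle point is the observation that the Lebesgue measure of a ball depends continuously on its radius, which is what legitimises replacing the continuum of radii by a countable dense set.
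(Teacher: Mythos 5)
Your proof is correct, and its skeleton is the same as the paper's: first reduce to showing each $\partial_\star^{F_m}A$ is Borel, then establish Borel measurability of the fixed-radius map $z\mapsto\cL^m(B(p_F(z),r)\cap A_{q_F(z)})$ by a Fubini/Tonelli argument, and finally replace the uncountable $\limsup_{r\to0}$ by a countable family of radii. The one step where you genuinely diverge is the countable reduction. You prove that $r\mapsto\cL^m(B(x,r)\cap C)$ is continuous (the spherical-shell estimate), so the supremum over $r\in(0,1/n)$ equals the supremum over rational $r$, and hence the density functions $\ph_A,\ps_A$ are themselves Borel. The paper instead restricts to dyadic radii $2^{-j}$ and uses the comparison, valid for $2^{-j-1}<r\le 2^{-j}$,
\[
\frac{\cL^m(B(x,r)\cap A_y)}{r^m}\le 2^m\cdot\frac{\cL^m(B(x,2^{-j})\cap A_y)}{(2^{-j})^m},
\]
which shows only that the set where $\limsup_{r\to0}$ is positive coincides with the set where the dyadic $\limsup$ is positive --- the two limsup values may differ by the factor $2^m$ --- but positivity is all that the definition of $\partial_\star^F A$ requires. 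Your route buys a slightly stronger conclusion (Borel measurability of the limsup functions, not merely of their positivity sets) and avoids the dyadic bookkeeping; the paper's route is more robust, since the comparison inequality uses only monotonicity of $r\mapsto\cL^m(B(x,r)\cap A_y)$ and would survive in settings where continuity in $r$ fails (for instance, for a measure charging spheres), whereas the shell argument is special to Lebesgue measure. Both arguments are complete; the only point you should make explicit is the identification $\fB(E\times F)=\fB(E)\otimes\fB(F)$ (valid here by separability), which is what licenses applying Tonelli to a jointly Borel integrand.
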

The proof is left to the next section.
The following theorem is the main theorem of this article.
\begin{thm}\label{th:main}
Let $A$ be a Borel subset of $E$ that has a finite perimeter.
Then, $\|A\|_E$ coincides with $\rho$ restricted on $\partial_\star A$. 
More precisely, 
\[
  \|A\|_E(B)=\rho(B\cap \partial_\star A),\quad
  B\in \fB(E).
\]
In particular, Eq.~\Eq{bv2} can be rewritten as
\begin{equation}\label{eq:bv3}
 \int_A(\nab^* G)\,d\mu=\int_{\partial_\star A} \la G,\sg_A\ra_H \,d\rho.
 \end{equation}
Further, the measure $(\rho,\fM^\rho(E))$ restricted on $\partial_\star A$ coincides with the completion of the measure $(\|A\|_E,\fB(\partial_\star A))$ on $\partial_\star A$.
\end{thm}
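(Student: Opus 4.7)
The plan is to prove the identification by finite-dimensional approximation, combining the classical structure theorem (Theorems~\ref{th:structure}--\ref{th:measuretheoretic}) applied slicewise with passage to the infinite-dimensional limit. Fix $m\in\N$ and consider the partial variation measure $\|A\|_E^{(m)}$ obtained by restricting the test vector fields $G$ in Definition~\ref{def:BV} to $\FCb^1(E\to F_m)$. Using the product decomposition $E=F_m\dotplus\tilde F_m$ and Fubini, integration by parts against such $G$ reduces to slicewise Gaussian integration by parts in $F_m$. For $\mu_{\tilde F_m}$-a.e.\ $y\in\tilde F_m$ the section $A_y\subset F_m$ then has finite perimeter in the Gaussian sense, and Theorems~\ref{th:structure}--\ref{th:measuretheoretic} applied to $A_y$, with the Gaussian density absorbed into the $(m{-}1)$-dimensional Hausdorff measure, identify the perimeter measure of $A_y$ as $\te_{F_m}^{m-1}\lfloor\partial_\star(A_y)$. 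Integrating over $y$ and using the slicewise identity $(\partial_\star^{F_m}A)_y=\partial_\star(A_y)$ yields
\[
\|A\|_E^{(m)}(B)=\rho_{F_m}\bigl(B\cap\partial_\star^{F_m}A\bigr),\qquad B\in\fB(E).
\]

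The second step is to let $m\to\infty$ on the analytic side. Since $\bigcup_m F_m$ is $H$-dense in $E^*$, any test field $G\in\FCb^1(E\to E^*)$ with $|G|_H\le1$ is pointwise approximated in $H$ by its $F_m$-projections, and a dominated-convergence argument on the adjoint $\nab^*G$ gives $\|A\|_E^{(m)}(B)\nearrow\|A\|_E(B)$ for open, hence all Borel, $B$. It remains to establish the geometric convergence
\[
\lim_{m\to\infty}\rho_{F_m}(B\cap\partial_\star^{F_m}A)=\rho(B\cap\partial_\star A).
\]
The ``$\ge$'' direction follows from Fatou combined with $\partial_\star A=\liminf_m\partial_\star^{F_m}A$ and the monotonicity $\rho_{F_m}\nearrow\rho$ from Proposition~\ref{prop:nondecreasing}. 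The reverse inequality is the main obstacle: because $\partial_\star^{F_m}A$ is \emph{not} monotone in $m$, transverse mass could in principle survive on $\partial_\star^{F_m}A\setminus\partial_\star A$ in the limit. The strategy is to exploit the total-mass balance furnished by Steps~1 and~2, namely $\rho_{F_m}(\partial_\star^{F_m}A)\nearrow\|A\|_E(E)$, together with the Fatou bound applied simultaneously to $B$ and to $E\setminus B$, which forces both Fatou inequalities to be equalities. The remaining technical work is a slicewise density-point argument: on the symmetric difference $\partial_\star^{F_m}A\triangle\partial_\star A$, the Lebesgue density of $A_{q_{F_m}(z)}$ at $p_{F_m}(z)$ stabilizes to $0$ or $1$ along a cofinal subsequence (since $z\notin\partial_\star A$ means exactly this), so the $\te_{F_m}^{m-1}$-mass of a small ball around $p_{F_m}(z)$ decays, and this can be used to show the potential excess $\rho_{F_m}(B\cap(\partial_\star^{F_m}A\setminus\partial_\star A))$ vanishes in the limit.

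Once the identification $\|A\|_E(B)=\rho(B\cap\partial_\star A)$ is established, formula~\Eq{bv3} follows immediately from~\Eq{bv2}, and the completion statement is automatic: $\|A\|_E$ is a finite Borel measure supported on the Borel set $\partial_\star A$ (Proposition~\ref{prop:Borel}), and $(\fM^\rho(E),\rho)$ is the completion of $(\fB(E),\rho)$ by Proposition~\ref{prop:Mrho}, so the completion of $(\|A\|_E,\fB(\partial_\star A))$ coincides with $\rho$ restricted to $\fM^\rho(E)\cap\partial_\star A$.
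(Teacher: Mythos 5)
Your Steps 1 and 2 and your Fatou direction are sound, and they coincide with the machinery the paper builds: the identity $\|A\|_E^{(m)}(B)=\rho_{F_m}(B\cap\partial_\star^{F_m}A)$ is exactly what Propositions~\ref{prop:slice} and \ref{prop:finitedim} yield (slicing of the perimeter measure plus the finite-dimensional identification via Theorems~\ref{th:structure}--\ref{th:measuretheoretic}), and your Fatou argument (fix $k$, use $\rho_{F_k}\le\rho_{F_m}$ from Proposition~\ref{prop:nondecreasing}, apply Fatou for the \emph{fixed} measure $\rho_{F_k}$, then let $k\to\infty$) correctly gives $\rho(B\cap\partial_\star A)\le\|A\|_E(B)$, which is the paper's inequality \Eq{star2}. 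One gloss worth flagging: the convergence $\|A\|_E^{(m)}(B)\nearrow\|A\|_E(B)$ for \emph{every} Borel $B$ is not really a dominated-convergence statement about test fields (that only controls total variations); it requires the projection formula $\|A\|_{E,F_m}(dz)=|\pi_{F_m}\sg_A(z)|_H\,\|A\|_E(dz)$ of Proposition~\ref{prop:finite}.

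The genuine gap is the reverse inequality $\|A\|_E(B)\le\rho(B\cap\partial_\star A)$, and neither of your two mechanisms closes it. First, the ``total-mass balance'' argument is circular: to force both Fatou inequalities (for $B$ and $E\setminus B$) to be equalities you must know $\rho(\partial_\star A)=\|A\|_E(E)$, but Fatou only gives $\rho(\partial_\star A)\le\|A\|_E(E)$; the missing half is precisely the case $B=E$ of the inequality you are trying to prove. Second, the slicewise density-point sketch cannot work as stated: by your own Steps 1 and 2, the excess satisfies $\rho_{F_m}\bigl(B\cap(\partial_\star^{F_m}A\setminus\partial_\star A)\bigr)=\|A\|_E^{(m)}(B\setminus\partial_\star A)\to\|A\|_E(B\setminus\partial_\star A)$, so proving that it vanishes is \emph{equivalent} to proving $\|A\|_E(E\setminus\partial_\star A)=0$, not a route to it; moreover, for $z\notin\partial_\star A$ the density stabilization occurs only along a cofinal subsequence of $m$'s depending on $z$, and there is no mechanism for integrating such pointwise, $z$-dependent information against the varying measures $\rho_{F_m}$. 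The paper's actual argument for this direction --- the crux of the whole proof --- is different: fix $k$ and show that for $\mu_{\tilde F_k}$-a.e.\ $y_k$ the slice measure $\|A_{y_k}\|_{F_k,F_k}$ gives zero mass to $(E\setminus\partial_\star^{F_m}A)_{y_k}$ for \emph{every} $m>k$ simultaneously; this is obtained by slicing the identity $\|A_{y_m}\|_{F_m,F_m}\bigl(F_m\setminus\partial_\star(A_{y_m})\bigr)=0$ through $F_m\ominus F_k$ (Proposition~\ref{prop:slice} applied to the Wiener space $(F_m,F_m,\mu_{F_m})$) and integrating over $y_m$. Since there are only countably many $m$, the a.e.\ inclusions $\partial_\star(A_{y_k})\subset(\partial_\star^{F_m}A)_{y_k}$ survive the $\liminf_m$, giving $\partial_\star(A_{y_k})\subset(\partial_\star A)_{y_k}$ up to a $\|A_{y_k}\|_{F_k,F_k}$-null set; integration over $y_k$ yields \Eq{star1}, and $k\to\infty$ with Proposition~\ref{prop:finite} finishes. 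This interchange-of-limits step, which the introduction identifies as the most crucial task, is exactly what your proposal replaces with the two failing arguments above.
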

As a consequence of this theorem, the symmetric difference of $\partial_\star A$ and $\partial_\star' A$ is a null set with respect to $\|A\|_E$, where $\partial_\star' A$ is the measure-theoretic boundary of $A$ with respect to another complete orthonormal system $\{l'_i\}_{i=1}^\infty$.
Indeed, by letting $B_1=\partial_\star' A\setminus \partial_\star A$ and $B_2=\partial_\star A\setminus \partial'_\star A$, and denoting the one-codimensional Hausdorff--Gauss measure with respect to $\{l'_i\}_{i=1}^\infty$ by $\rho'$, we have
\[
\|A\|_E(B_1)=\rho(B_1\cap \partial_\star A)=0
\quad\mbox{and}\quad
\|A\|_E(B_2)=\rho'(B_2\cap \partial'_\star A)=0.
\]
\section{Proof}
\begin{proof}[Proof of Proposition~\ref{prop:Mrho}]
Let $A\in \fM^\rho(E)$. Then, there exist $B,C\in\fB(E)$ such that $B\subset A\subset C$ and $\rho(C\setminus B)=0$.
Let $m\in\N$. From Proposition~\ref{prop:nondecreasing}, $\rho_{F_m}(C\setminus B)=0$, which, from Eq.~\Eq{rhoF} and the Fubini theorem, implies that $\te_{F_m}^{m-1}((C\setminus B)_y)=0$ for $\mu_{\tilde F_m}$-a.e.\,$y\in\tilde F_m$.
For such $y$, $\te_{F_m}^{m-1}(B_y)=\te_{F_m}^{m-1}(C_y)=\te_{F_m}^{m-1}(A_y)$.
Since 
\[
\tilde F_m\ni y\mapsto \te_{F_m}^{m-1}(B_y)\in[0,\infty]
\]
is $\fM(\tilde F_m)$-measurable, $\te_{F_m}^{m-1}(A_y)$ is also $\fM(\tilde F_m)$-measurable in $y\in\tilde F_m$.
Therefore, we have $A\in\cD_{F_m}$ and $\rho_{F_m}(A)=\rho_{F_m}(B)$.
Consequently, we conclude that $A\in\cD$ and $\rho(A)=\rho(B)$.
In particular, the measure space $(E,\fM^\rho(E),\rho)$ is the completion of $(E,\fB(E),\rho)$.
\end{proof}
\begin{proof}[Proof of Proposition~\ref{prop:Borel}]
It is sufficient to prove that $\partial_\star^F A$ in Definition~\ref{def:boundary} is a Borel set.
Let $r>0$.
Since the map 
\[
F\times F\times\tilde F\ni(x,w,y)\mapsto \bone_{B(x,r)}(w)\bone_A(w+y)\in\R
\]
is Borel measurable, from the Fubini theorem, the map 
\[
F\times\tilde F\ni(x,y)\mapsto\int_F\bone_{B(x,r)}(w)\bone_A(w+y)\,\cL^m(dw)\in\R
\]
is Borel measurable.
Therefore, $\cL^m(B(p_F(z),r)\cap A_{q_F(z)})$ is a Borel measurable function in $z\in E$.

We will prove that
\begin{align}
\lefteqn{\left\{z\in E\,\left|\,\limsup_{r\to0}\frac{\cL^m(B(p_F(z),r)\cap A_{q_F(z)})}{r^m}>0\right\}\right.}&\nonumber\\
&=\left\{z\in E\,\left|\,\limsup_{j\in\N,\,j\to\infty}\frac{\cL^m(B(p_F(z),2^{-j})\cap A_{q_F(z)})}{(2^{-j})^m}>0\right\}\right..
\label{eq:limsup}
\end{align}
Denote the left-hand side and the right-hand side by $B_1$ and $B_2$, respectively.
The inclusion $B_1\supset B_2$ is trivial. 
When $2^{-j-1}<r\le 2^{-j}$, 
\[
  \frac{\cL^m(B(x,r)\cap A_y)}{r^m}
  \le 2^m\cdot\frac{\cL^m(B(x,2^{-j})\cap A_y)}{(2^{-j})^m},\quad
  x\in F,\ y\in\tilde F.
\]
This implies that
\[
\limsup_{r\to0}\frac{\cL^m(B(p_F(z),r)\cap A_{q_F(z)})}{r^m}
\le 2^m\limsup_{j\in\N,\,j\to\infty}\frac{\cL^m(B(p_F(z),2^{-j})\cap A_{q_F(z)})}{(2^{-j})^m}.
\]
Therefore, $B_1\subset B_2$.
Hence, \Eq{limsup} holds.
The Borel measurability of this set results from the expression $B_2$.

Similarly, we can prove that the set $\{z\in E\mid\limsup_{r\to0} r^{-m}\cL^m(B(p_F(z),r)\setminus A_{q_F(z)})>0\}$ is also Borel measurable.
\end{proof}
The rest of this section is devoted to the proof of Theorem~\ref{th:main}.
We use the same notations as those used in the previous section.
In the following discussion, let $F$ be a finite-dimensional subspace of $E^*$ or $F=E$.
Let $K$ be a finite-dimensional subspace of $F\cap E^*$.
We regard $K$ as a subspace of $H$ and include the inner product induced from $H$ in $K$.
As a convention, $\mu$ is denoted by $\mu_F$ when $F=E$.
When $F$ is finite-dimensional,
we define $\FCb^1(F\to K)$ (resp.\ $\FCb^1(\tilde F\to K)$), as in \Eq{fcb2}, with respect to the abstract Wiener space $(F,F,\mu_F)$ (resp.\ $(\tilde F,\tilde F\cap H,\mu_{\tilde F})$).
In this case,
$\FCb^1(F\to K)$ is also denoted by $C_b^1(F\to K)$.
By abuse of notation, the gradient operator and its adjoint operator for both $(F,F,\mu_F)$ and $(\tilde F,\tilde F\cap H,\mu_{\tilde F})$ are denoted by the symbols $\nab$ and $\nab^*$, respectively, which are the same as those for $(E,H,\mu)$.

For $A\in \fM(F)$, we define
\[
 V_{F,K}(A)=\sup\left\{\left.\int_A\nab^*G\,d\mu_F\right|\,
 G\in\FCb^1(F\to K),\ |G(x)|_K\le 1\mbox{ for all }x\in F\right\}
 (\le\infty).
\]
\begin{prop}\label{prop:VFK}
Suppose $V_{F,K}(A)<\infty$.
Then, there exist a Borel measure $\|A\|_{F,K}$ on $F$ and a $K$-valued Borel measurable function $\sg_{A,F,K}$ on $F$ such that $\|A\|_{F,K}(F)=V_{F,K}(A)$, $|\sg_{A,F,K}(z)|_K=1$ for $\|A\|_{F,K}$-a.e.\,$z$, and
\begin{equation}\label{eq:caci}
  \int_A \nab^* G\,d\mu_F=\int_F\la G,\sg_{A,F,K}\ra_K\,d\|A\|_{F,K}
  \quad \mbox{for every }G\in\FCb^1(F\to K).
\end{equation}
Also, $\|A\|_{F,K}$ and $\sg_{A,F,K}$ are uniquely determined; in other words, if $\|A\|_{F,K}'$ and $\sg_{A,F,K}'$ are different from $\|A\|_{F,K}$ and $\sg_{A,F,K}$ and satisfy  relation \Eq{caci}, then $\|A\|_{F,K}=\|A\|_{F,K}'$ and $\sg_{A,F,K}(z)=\sg_{A,F,K}'(z)$ for $\|A\|_{F,K}$-a.e.\,$z$.
\end{prop}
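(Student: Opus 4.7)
The approach is to obtain the pair $(\|A\|_{F,K},\sg_{A,F,K})$ by a Riesz representation argument applied to the linear functional $L(G):=\int_A\nab^*G\,d\mu_F$ on $\FCb^1(F\to K)$, paralleling the proof of Theorem~\ref{th:FH} (i.e.\ \cite[Theorem~3.9]{FH01}) with the target space $E^*$ replaced by $K$ and the base space $E$ possibly replaced by a finite-dimensional $F$. First, the hypothesis $V_{F,K}(A)<\infty$, together with linearity and the symmetry $L(-G)=-L(G)$, yields $|L(G)|\le V_{F,K}(A)\sup_{z\in F}|G(z)|_K$ for every admissible $G$, so $L$ is continuous in the sup-norm.

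For finite-dimensional $F$ I would exploit the fact that $\FCb^1(F\to K)=C_b^1(F\to K)$ is sup-norm dense in $C_0(F;K)$ (via truncation and mollification) to extend $L$ to $C_0(F;K)$, and then invoke the classical vector-valued Riesz--Markov theorem to obtain a finite $K$-valued Radon measure $\mathbf{M}$ on $F$ representing $L$. The polar decomposition $d\mathbf{M}=\sg_{A,F,K}\,d|\mathbf{M}|$ then supplies $\|A\|_{F,K}:=|\mathbf{M}|$ and a unit Borel field $\sg_{A,F,K}$ verifying \Eq{caci}, with total mass $\|A\|_{F,K}(F)=\|L\|=V_{F,K}(A)$. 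For $F=E$, the strategy from \cite{FH01} is imitated: apply the finite-dimensional step to each $F_m$ to produce measures $\mathbf{M}_m$, check that they form a consistent, tight family under the projections $p_{F_m}$, and pass to a limit to build a finite $K$-valued Borel measure on $E$. Since $K$ is finite-dimensional, the polar decomposition of the resulting limit measure is immediate.

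For uniqueness I would fix an orthonormal basis $\{k_1,\dots,k_d\}$ of $K$ and specialize \Eq{caci} to $G=u(\cdot)k_j$ with arbitrary $u\in\FCb^1(F)$: if $(\|A\|_{F,K}',\sg_{A,F,K}')$ gives a second representation of $L$, then
\[
\int_F u\,\la k_j,\sg_{A,F,K}\ra_K\,d\|A\|_{F,K}
=\int_F u\,\la k_j,\sg_{A,F,K}'\ra_K\,d\|A\|_{F,K}'
\]
for every such $u$ and each $j$. Since $\FCb^1(F)$ is measure-determining on $(F,\fB(F))$, the corresponding signed measures on $F$ coincide; uniqueness of the polar decomposition of the $K$-valued vector measure $\sg_{A,F,K}\,d\|A\|_{F,K}$ then forces $\|A\|_{F,K}=\|A\|_{F,K}'$ as Borel measures and $\sg_{A,F,K}=\sg_{A,F,K}'$ $\|A\|_{F,K}$-a.e.

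The principal obstacle is the case $F=E$: the classical Riesz--Markov theorem is unavailable because closed balls of $E$ are noncompact and $\FCb^1(E\to K)$ is not sup-norm dense in any natural space of continuous $K$-valued functions on $E$. One must therefore push through the tightness/consistency argument for the finite-dimensional approximants $\mathbf{M}_m$, which is precisely the technical content of \cite[Theorem~3.9]{FH01}. The transfer of that construction to a finite-dimensional target $K\subset F\cap E^*$ is essentially formal, so no new infinite-dimensional analysis beyond that already established in \cite{FH01} is required.
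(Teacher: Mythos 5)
Your proposal is correct in substance but follows a genuinely different route from the paper's. The paper never invokes a vector-valued Riesz--Markov theorem and never splits into the cases ``$F$ finite-dimensional'' and ``$F=E$''. Instead it exploits the finite-dimensionality of $K$ from the very start: fixing an orthonormal basis $h_1,\dots,h_k$ of $K$, it applies the already-established \emph{scalar} representation result (\cite[Theorem~2.1]{FH01} together with the first part of the proof of \cite[Theorem~3.9]{FH01}) to each coordinate functional $g\mapsto\int_A\nab^*(g\,h_i)\,d\mu_F$, obtaining signed Borel measures $D_A^i$ on $F$; it then sets $\Delta_A=\sum_{i=1}^k|D_A^i|$, takes Radon--Nikodym derivatives $\gm_i=dD_A^i/d\Delta_A$, and defines $\|A\|_{F,K}$ and $\sg_{A,F,K}$ explicitly by \Eq{AFK} and \Eq{sAFK}; the total-mass identity follows by approximating $\sg_{A,F,K}$ by admissible $G_n$. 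This single argument covers finite-dimensional $F$ and $F=E$ simultaneously, because the infinite-dimensional difficulty (your tightness/consistency step) is entirely delegated to the quoted scalar result. By comparison, your route carries two extra burdens. First, in the finite-dimensional case, Riesz--Markov represents $L$ only on $C_0(F;K)$, whereas \Eq{caci} must hold for all of $\FCb^1(F\to K)=C_b^1(F\to K)$, whose elements need not vanish at infinity; you need an additional cutoff argument (multiply $G$ by $C^1$ cutoffs $\chi_n$ with $\sup|\nab\chi_n|\to0$ and use $\nab^*G\in L^1(\mu_F)$) to transfer the identity from $C_0$ to $C_b^1$ --- a fillable but unaddressed step. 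Second, for $F=E$ you propose to re-run the consistency/tightness construction of \cite{FH01} with target $K$; the ``essentially formal'' transfer you allude to is most economically realized precisely by the paper's coordinatewise reduction, so your route in effect re-derives what can simply be cited. Your uniqueness argument (specializing to $G=u(\cdot)k_j$, the measure-determining property of cylinder functions, and uniqueness of the polar decomposition of a $K$-valued measure) is essentially the same as the paper's, which refers back to \cite[Theorem~3.9]{FH01}.
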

\begin{proof}
The proof of this proposition is similar to that in \cite[Theorem~3.9]{FH01};
this proof is simpler since $K$ is finite-dimensional.

Let $k=\dim K$.
Select an orthonormal basis $h_1,\dots, h_k$ of $K$. 
Let $i=1,\dots, k$. Select $g\in \FCb^1(F)$ and let $G(\cdot)=g(\cdot)h_i\in \FCb^1(F\to K)$.
Then,
\[
  (\nab^* G)(z)=-(\partial_{h_i}g)(z)+g(z)h_i(z).
\]
From \cite[Theorem~2.1]{FH01} and the argument in the first part of the proof of \cite[Theorem~3.9]{FH01}, there exists a signed Borel measure $D_A^i$ on $F$ such that
\begin{equation}\label{eq:DAi1}
  \int_F (\nab^*G) \bone_A\,d\mu_F
  = \int_F g\,dD_A^i\qquad
  \mbox{for all }G(\cdot)=g(\cdot)h_i\in \FCb^1(F\to K).
\end{equation}
Define $\Delta_A:=\sum_{i=1}^k|D_A^i|$, where $|D_A^i|$ is the total variation measure of $D_A^i$.
For each $i$, denote the Radon--Nikodym derivative $dD_A^i/d\Delta_A$ by $\gm_i$. 
We may assume that $\gm_i$ is Borel measurable.
Define a Borel measure $\|A\|_{F,K}$ on $F$ and a $K$-valued Borel measurable function $\sg_{A,F,K}$ on $F$ as
\begin{align}
\|A\|_{F,K}(dz)&=\sqrt{\sum_{j=1}^k\gm_j(z)^2}\,\Delta_A(dz),\label{eq:AFK}\\
\sg_{A,F,K}(z)&=\begin{cases}\displaystyle
\sum_{i=1}^k\frac{\gm_i(z)}{\sqrt{\sum_{j=1}^k\gm_j(z)^2}}h_i
& \mbox{if }\displaystyle\sum_{j=1}^k\gm_j(z)^2\ne0\\
\hss 0
& \mbox{if }\displaystyle\sum_{j=1}^k\gm_j(z)^2=0
\end{cases}.
\label{eq:sAFK}
\end{align}
Then, for any $i=1,\dots, k$ and $g\in \FCb^1(F)$,
\[
\int_F g\,dD_A^i
=\int_F g\,\la h_i,\sg_{A,F,K}\ra_K\,d\|A\|_{F,K}.
\]
We obtain \Eq{caci} by
combining this equation with \Eq{DAi1}.
By construction, $|\sg_{A,F,K}(z)|_K=1$ for $\|A\|_{F,K}$-a.e.\,$z\in F$.
It is evident from \Eq{caci} that the inequality $V_{F,K}(A)\le\|A\|_{F,K}(F)$ holds.
To prove the converse inequality, it is sufficient to select a sequence $\{G_n\}_{n=1}^\infty$ from $\FCb^1(F\to K)$ such that $|G_n(z)|_K\le 1$ for all $z\in F$ and $\lim_{n\to \infty}G_n(z)=\sg_{A,F,K}(z)$ for $\|A\|_{F,K}$-a.e.\,$z$.

The uniqueness is proved in the same manner as in the proof of \cite[Theorem~3.9]{FH01}.
\end{proof}

Henceforth, let $F$ be a finite-dimensional subspace of $E^*$ and $K$ be a subspace of $F$.
As in previous sections, both $F$ and $K$ are regarded as subspaces of $H$ as well as $E^*$.\begin{prop}\label{prop:V}
Let $A\in\fM(E)$ with $V_{E,K}(A)<\infty$.
Then, the map $\tilde F\ni y\mapsto V_{F,K}(A_y)\in[0,\infty]$ is $\fM(\tilde F)$-measurable, and
\begin{equation}\label{eq:V}
  \int_{\tilde F} V_{F,K}(A_y)\,\mu_{\tilde F}(dy)\le V_{E,K}(A).
\end{equation}
In particular, $V_{F,K}(A_y)<\infty$ for $\mu_{\tilde F}$-a.e.\,$y\in \tilde F$.
Here, $A_y$ is a section of $A$ that is defined in \Eq{section}.
\end{prop}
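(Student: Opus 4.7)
My plan is to reduce both claims to a two-stage Fubini argument: extract a countable family of test fields on $F$ to secure measurability, then use a smooth partition-of-unity approximation in the $\tilde F$-variable to realize a pointwise choice of near-optimal test fields on each section by a single admissible test field on $E$.

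For measurability, since $\FCb^1(F\to K)$ is separable in the topology of uniform convergence of the function and its first derivatives on compact subsets of $F$, I would extract a countable family $\{G_j\}_{j=0}^\infty\subset\FCb^1(F\to K)$ with $G_0\equiv 0$ and $|G_j|_K\le 1$ such that
\[
V_{F,K}(B)=\sup_{j\ge 0}\int_B\nab^*G_j\,d\mu_F\qquad\text{for every }B\in\fM(F).
\]
Setting $v_j(y):=\int_{A_y}\nab^*G_j\,d\mu_F$, joint Borel measurability of $(x,y)\mapsto(\nab^*G_j)(x)\bone_A(x+y)$ together with Fubini (valid because $\nab^*G_j\in L^1(\mu_F)$, making each $v_j$ bounded on $\tilde F$) shows that each $v_j$ is $\fM(\tilde F)$-measurable, and hence so is $y\mapsto V_{F,K}(A_y)=\sup_j v_j(y)$.

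For the integral estimate, I would lift combinations of the $G_j$'s to $E$ as follows. Given nonnegative $\phi_1,\ldots,\phi_N\in\FCb^1(\tilde F)$ with $\sum_j\phi_j\le 1$, set
\[
H(z):=\sum_{j=1}^N\phi_j(q_F(z))\,G_j(p_F(z)).
\]
Because every $h\in K\subset F\subset E^*$ annihilates $\tilde F$, one has $\partial_h(\phi_j\circ q_F)\equiv 0$ and $h(z)=h(p_F(z))$, whence $H\in\FCb^1(E\to K)$, $|H(z)|_K\le 1$ pointwise, and a direct computation gives $(\nab^*H)(z)=\sum_{j=1}^N\phi_j(q_F(z))(\nab^*G_j)(p_F(z))$. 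Integrating against $\bone_A$ and using Fubini,
\[
\sum_{j=1}^N\int_{\tilde F}\phi_j(y)\,v_j(y)\,\mu_{\tilde F}(dy)=\int_A\nab^*H\,d\mu\le V_{E,K}(A).
\]

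Now fix $N\ge 1$ and consider the Borel partition $B_j:=\{y\in\tilde F:j=\min\{i\le N:v_i(y)=\max_{k\le N}v_k(y)\}\}$ of $\tilde F$. Approximating each $\bone_{B_j}$ by $\phi_j^{(k)}\in\FCb^1(\tilde F)$ with $\phi_j^{(k)}\ge 0$, $\sum_j\phi_j^{(k)}\le 1$, and $\phi_j^{(k)}\to\bone_{B_j}$ in $L^1(\mu_{\tilde F})$, bounded convergence in the displayed inequality (using the uniform boundedness of each $v_j$) yields
\[
\int_{\tilde F}\max_{1\le j\le N}v_j(y)\,\mu_{\tilde F}(dy)\le V_{E,K}(A).
\]
Since $v_0\equiv 0$, the integrand is nonnegative and increases to $V_{F,K}(A_y)$ as $N\to\infty$, and monotone convergence produces \Eq{V}; finiteness then forces $V_{F,K}(A_y)<\infty$ for $\mu_{\tilde F}$-a.e.\,$y$. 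The main technical obstacle is constructing such smooth partitions of unity on the infinite-dimensional space $\tilde F$ that approximate an arbitrary Borel partition. I expect to handle this by first approximating each $B_j$ by a set depending on only finitely many coordinates of a fixed orthonormal basis of $\tilde F\cap H$, then applying standard finite-dimensional mollification on the resulting quotient, and finally normalizing via a smoothed softmax so as to preserve the constraint $\sum_j\phi_j^{(k)}\le 1$.
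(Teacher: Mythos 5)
Your overall architecture is essentially the paper's: reduce to a countable sup-achieving family of test fields on $F$, obtain measurability of $y\mapsto V_{F,K}(A_y)$ from Fubini, and then lift a near-optimal sectionwise choice to a single admissible field on $E$ of the form $\sum_j\phi_j(q_F(z))G_j(p_F(z))$, using that $K\subset F$ forces $\nab(\phi_j\circ q_F)$ to be orthogonal in $H$ to $K$-valued fields, so that $\nab^*$ acts sectionwise. The paper partitions $\tilde F$ by an $\eps$-greedy rule (with a cap $\eps^{-1}$) and smooths the combined field in $L^2(\tilde F\to\D^{1,2}(F\to K))$, while you partition by argmax over the first $N$ indices and smooth only the scalar coefficients in $L^1(\mu_{\tilde F})$; these are equally valid variants, and your bounded/monotone convergence steps are fine (indeed, approximating $\bone_{B_j}$ by conditional expectations onto finitely many coordinates and then mollifying automatically preserves nonnegativity and $\sum_j\phi_j\le1$, so no softmax correction is needed).

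There is, however, one genuine gap: the extraction of the countable family $\{G_j\}$ with $V_{F,K}(B)=\sup_j\int_B\nab^*G_j\,d\mu_F$ for \emph{every} $B\in\fM(F)$ does not follow from separability of $\FCb^1(F\to K)$ in the topology of uniform convergence on compacta of the function and its first derivatives. The functional $G\mapsto\int_B\nab^*G\,d\mu_F$ is not continuous in that topology: pointwise $\nab^*G(x)=-\dv G(x)+\la G(x),x\ra_F$, and locally uniform convergence of derivatives gives no control of $\int_{\{|x|_F>R\}}|\dv G_{j_k}|\,d\mu_F$; members of a countable compact-open-dense family can have arbitrarily large divergence in the Gaussian tail while remaining bounded by one (e.g.\ rapidly oscillating fields), so the supremum over such a family can a priori be strictly smaller than $V_{F,K}(B)$. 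What you need -- and what the paper does -- is to choose $\{G_j\}\subset C_b^1(F\to K)$ with $|G_j|_K\le1$ everywhere, dense in $\{g\in\D^{1,2}(F\to K)\mid |g|_K\le1\ \mu_F\mbox{-a.e.}\}$ for the $\D^{1,2}$-norm; since $\nab^*$ extends to a continuous operator from $\D^{1,2}(F\to K)$ to $L^2(F)$, one gets $\bigl|\int_B\nab^*(G-G_j)\,d\mu_F\bigr|\le\|\nab^*(G-G_j)\|_{L^2(\mu_F)}$ uniformly in $B$, which yields the sup-achieving property for all $B$ simultaneously. With this replacement (and with the minor correction that, since $A$ is only $\fM(E)$-measurable rather than Borel, the Fubini argument is the one for completed product measures, giving $A_y\in\fM(F)$ for $\mu_{\tilde F}$-a.e.\ $y$ and $\fM(\tilde F)$-measurability of the integrals), the rest of your argument goes through and reproduces the paper's proof.
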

\begin{rem}
In fact, equality holds in \Eq{V}. This will be proved in Proposition~\ref{prop:slice}~(iii).
\end{rem}
\begin{proof}[Proof of Proposition~$\ref{prop:V}$.]
Let $\D^{1,2}(F\to K)$ denote the $(1,2)$-Sobolev space of $K$-valued functions on $F$; in other words, it is the completion of $C_b^1(F\to K)$ with respect to the norm $\|f\|_{\D^{1,2}}:=\left(\int_F (|\nab f|_{F\otimes K}^2+|f|_K^2)\,d\mu_F\right)^{1/2}$.
This is a Hilbert space with the inner product $\la f,g\ra_{\D^{1,2}}:=\int_F (\la\nab f,\nab g\ra_{F\otimes K}+\la f,g\ra_K)\,d\mu_F$.
Select a sequence $\{f_j\}_{j=1}^\infty$ from $C_b^1(F\to K)$ such that the following hold:
\begin{itemize}
\item $|f_j(x)|_K\le1$ for all $j\in\N$ and $x\in F$.
\item The set $\{f_j\mid j\in\N\}$ is dense in $\{g\in\D^{1,2}(F\to K)\mid |g(x)|_K\le 1\ \mbox{for $\mu_F$-a.e.}\,x\}$ with the topology of $\D^{1,2}(F\to K)$.
\end{itemize}
For any $B\in\fM(F)$, we have
\[
  V_{F,K}(B)=\sup_{j\in\N}\int_B \nab^* f_j(x)\,\mu_F(dx),
\]
since $\nab^* $ extends to a continuous operator from $\D^{1,2}(F\to K)$ to $L^2(F)$.
For $f\in C_b^1(F\to K)$, the map
\[
 E\simeq F\times \tilde F\ni (x,y)\mapsto \bone_A(x+y)\nab^* f(x)\in\R
\]
is $\fM(E)$-measurable. By the Fubini theorem, $A_y\in\fM(F)$ for $\mu_{\tilde F}$-a.e.\,$y\in \tilde F$, and the map
\[
  \tilde F\ni y\mapsto \int_{A_y}\nab^* f(x)\,\mu_F(dx)\in \R
\]
is $\fM(\tilde F)$-measurable. 
Therefore, the map
$\tilde F\ni y\mapsto V_{F,K}(A_y)\in[0,\infty]$ is also $\fM(\tilde F)$-measurable.

Let $\eps>0$.
We inductively define a sequence $\{C_j\}_{j=0}^\infty$ of subsets of $\tilde F$ as follows:
\begin{align*}
C_0&=\emptyset,\\
C_j&=\left\{y\in\tilde F\left|\, A_y\in\fM(F)\mbox{ and }
\int_{A_y}\nab^* f_j(x)\,\mu_F(dx)\ge(1-\eps)V_{F,K}(A_y)\wedge\eps^{-1}\right.\right\}\mbox{\Huge$\setminus$}\,\bigcup_{i=0}^{j-1}C_i,\\
&\qquad j=1,2,\dots.
\end{align*}
Then, $C_j\in\fM(\tilde F)$ for all $j$ and $\mu_{\tilde F}(\tilde F\setminus \bigcup_{j=1}^\infty C_j)=0$.

Let $n\in\N$. Define $D_n=\bigcup_{j=1}^n C_j$ and
\[
g_n(x,y)=\sum_{j=1}^n f_j(x) \bone_{C_j}(y)
\quad\mbox{for }(x,y)\in F\times \tilde F\simeq E.
\]
We also regard $g_n$ as an element of $L^2(\tilde F\to\D^{1,2}(F\to K))$ by the map 
\[
\tilde F\ni y\mapsto(x\mapsto g_n(x,y))\in \D^{1,2}(F\to K).
\]
Since $\FCb^1(\tilde F\to\D^{1,2}(F\to K))$ is dense in $L^2(\tilde F\to\D^{1,2}(F\to K))$ and $C_b^1(F\to K)$ is dense in $\D^{1,2}(F\to K)$,  $\FCb^1(\tilde F\to C_b^1(F\to K))$ is dense in $L^2(\tilde F\to\D^{1,2}(F\to K))$.
Therefore, we can select a sequence $\{u_j\}_{j=1}^\infty$ from $\FCb^1(\tilde F\to C_b^1(F\to K))$ -- also considered a subspace of $\FCb^1(E\to K)$ -- such that 
\begin{itemize}
\item $u_j\to g_n$ in $L^2(\tilde F\to\D^{1,2}(F\to K))$ and $\mu_{\tilde F}$-a.e.\ as $j\to\infty$,
\item $|u_j(x,y)|_K\le 1$ for all $j\in\N$ and $(x,y)\in F\times \tilde F$.
\end{itemize}
For $\mu_{\tilde F}$-a.e.\,$y\in\tilde F$, we have
\begin{align*}
\lim_{j\to\infty}\int_{A_y}\nab^*(u_j(\cdot,y))\,d\mu_F
&=\int_{A_y}\nab^*(g_n(\cdot,y))\,d\mu_F\\
&=\begin{cases}\displaystyle
\int_{A_y}\nab^* f_m\,d\mu_F &\mbox{if $y\in C_m$ for some $m=1,\dots,n$}\\
\hss0&\mbox{if }y\notin D_n
\end{cases}\\
&\ge ((1-\eps)V_{F,K}(A_y)\wedge \eps^{-1})\cdot \bone_{D_n}(y).
\end{align*}
Therefore,
\begin{align}
\int_{D_n}\{(1-\eps)V_{F,K}(A_y)\wedge \eps^{-1}\}\,\mu_{\tilde F}(dy)
&\le \int_{\tilde F}\left(\lim_{j\to\infty}\int_{A_y}\nab^*(u_j(\cdot,y))\,d\mu_F\right)\mu_{\tilde F}(dy)
\nonumber\\
&= \lim_{j\to\infty}\int_{\tilde F}\left(\int_{A_y}\nab^*(u_j(\cdot,y))\,d\mu_F\right)\mu_{\tilde F}(dy)\nonumber\\
&=\lim_{j\to\infty}\int_A\nab^* u_j\,d\mu\nonumber\\
&\le V_{E,K}(A).\label{eq:VEKA}
\end{align}
Here, to obtain the equality in the second line, we used the uniform integrability of the sequence $\left\{\int_{A_y}\nab^*(u_j(\cdot,y))\,d\mu_F\right\}_{j=1}^\infty$, which follows from 
\[
\sup_{j\in\N}\int_{\tilde F}\left(\int_{A_y}\nab^*(u_j(\cdot,y))\,d\mu_F\right)^2\mu_{\tilde F}(dy)
\le \sup_{j\in\N}\int_{\tilde F}\left(\int_{F}(\nab^*(u_j(\cdot,y)))^2\,d\mu_F\right)\mu_{\tilde F}(dy)<\infty.
\]
To obtain the equality in the third line in \Eq{VEKA}, we used the identity $(\nab^*(u_j(\cdot,y)))(x)=(\nab^*u_j)(x,y)$, which follows from the assumption that $K$ is a subspace of $F$.
By letting $\eps\downarrow0$ and $n\to\infty$ in \Eq{VEKA}, we obtain \Eq{V}. 
\end{proof}
\begin{prop}\label{prop:slice}
Let $A\in\fM(E)$ with $V_{E,K}(A)<\infty$.\begin{enumerate}
\item Let $f$ be a bounded Borel measurable function on $E$.
Then, the map
\[
  \tilde F\ni y\mapsto \int_F f(x+y)\,\|A_y\|_{F,K}(dx)\in\R
\]
is $\fM(\tilde F)$-measurable, and
\[
\int_E f\,d\|A\|_{E,K}
=\int_{\tilde F}\left(
\int_F f(x+y)\,\|A_y\|_{F,K}(dx)\right)\mu_{\tilde F}(dy).
\]
In particular, for any $B\in \fB(E)$, the map
\[
  \tilde F\ni y\mapsto \|A_y\|_{F,K}(B_y)\in[0,\infty)
\]
is $\fM(\tilde F)$-measurable, and
\begin{equation}\label{eq:AEKB}
\|A\|_{E,K}(B)
=\int_{\tilde F} \|A_y\|_{F,K}(B_y)\,\mu_{\tilde F}(dy).
\end{equation}
\item For $\mu_{\tilde F}$-a.e.\,$y\in\tilde F$,
\[
\sg_{A,E,K}(x+y)=\sg_{A_y,F,K}(x)
\quad\mbox{for }\|A_y\|_{F,K}\mbox{-a.e.}\,x\in F.
\]
\item In Eq.~\Eq{V}, equality holds.
\end{enumerate}
\end{prop}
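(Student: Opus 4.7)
My plan is to build a candidate pair $(\tilde\nu,\tilde\sg)$ on $E$ from the finite-dimensional slice data and then invoke the uniqueness part of Proposition~\ref{prop:VFK} with $F=E$ to identify it with $(\|A\|_{E,K},\sg_{A,E,K})$. Concretely, set
\[
\tilde\nu(B):=\int_{\tilde F}\|A_y\|_{F,K}(B_y)\,\mu_{\tilde F}(dy),\qquad B\in\fB(E),
\]
and $\tilde\sg(z):=\sg_{A_{q_F(z)},F,K}(p_F(z))$ wherever defined. Once I verify that (a)~$\tilde\nu$ is a well-defined finite Borel measure on $E$, (b)~$|\tilde\sg|_K=1$ $\tilde\nu$-a.e.\ (immediate from the pointwise definition and Proposition~\ref{prop:VFK} applied to each $A_y$), and (c)~the pair solves the defining BV relation $\int_A\nab^*G\,d\mu=\int_E\la G,\tilde\sg\ra_K\,d\tilde\nu$ for every $G\in\FCb^1(E\to K)$, uniqueness in Proposition~\ref{prop:VFK} forces $\tilde\nu=\|A\|_{E,K}$ and $\tilde\sg=\sg_{A,E,K}$ $\tilde\nu$-a.e. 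Part~(i) for $B\in\fB(E)$ is then immediate, and its extension to bounded Borel $f$ on $E$ follows by a routine monotone-class step. Part~(ii) is the identification of $\tilde\sg$ with $\sg_{A,E,K}$, while part~(iii) results from taking $B=E$ in \eqref{eq:AEKB}: $V_{E,K}(A)=\|A\|_{E,K}(E)=\int_{\tilde F}\|A_y\|_{F,K}(F)\,\mu_{\tilde F}(dy)=\int_{\tilde F}V_{F,K}(A_y)\,\mu_{\tilde F}(dy)$.

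The computational core is the commutation of $\nab^*$ with the slicing $G_y(x):=G(x+y)$. Writing $G=\sum_{i=1}^k g_i\,h_i$ with $\{h_i\}$ an orthonormal basis of $K\subset F\subset E^*$, the formula
\[
(\nab^*G)(z)=\sum_{i=1}^k\bigl(-(\de_{h_i}g_i)(z)+g_i(z)h_i(z)\bigr)
\]
has two features that matter. First, since $h_i\in F$, differentiation in the $h_i$-direction commutes with freezing the $\tilde F$-coordinate, so $(\de_{h_i}g_i)(x+y)=(\de_{h_i}G_y^i)(x)$. Second, since $h_i\in F\subset E^*$ annihilates $\tilde F$, $h_i(x+y)=h_i(x)$. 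Therefore $(\nab^*G)(x+y)=(\nab^*G_y)(x)$, where the right-hand side is the adjoint on $F$. Applying Fubini (valid because $A_y\in\fM(F)$ for $\mu_{\tilde F}$-a.e.\ $y$ by Proposition~\ref{prop:V}) and then Proposition~\ref{prop:VFK} to each section (legal since $V_{F,K}(A_y)<\infty$ for $\mu_{\tilde F}$-a.e.\ $y$) gives
\[
\int_A\nab^*G\,d\mu=\int_{\tilde F}\!\int_F\la G_y,\sg_{A_y,F,K}\ra_K\,d\|A_y\|_{F,K}\,d\mu_{\tilde F}=\int_E\la G,\tilde\sg\ra_K\,d\tilde\nu,
\]
which is exactly (c).

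The main obstacle is the measurability prerequisite needed to make $\tilde\nu$ a bona fide Borel measure, namely that $y\mapsto\|A_y\|_{F,K}(B_y)$ be $\fM(\tilde F)$-measurable for every $B\in\fB(E)$. I would handle this by a monotone-class argument generated by rectangles $B=B_F\times B_{\tilde F}$, for which $\|A_y\|_{F,K}(B_y)=\bone_{B_{\tilde F}}(y)\,\|A_y\|_{F,K}(B_F)$; the measurability of $y\mapsto\|A_y\|_{F,K}(B_F)$ is inherited from that of the signed measures $D_{A_y}^i$ of Proposition~\ref{prop:VFK} via the explicit formula \eqref{eq:AFK}, whose construction is already Fubini-compatible in $y$ (in the spirit of the argument used in the proof of Proposition~\ref{prop:V}). $\sigma$-additivity of $\tilde\nu$ then follows from monotone convergence. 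It is worth noting that the commutation $(\nab^*G)(x+y)=(\nab^*G_y)(x)$ relies essentially on the standing assumption $K\subset F$; otherwise the linear term $h_i(z)$ would mix the $F$- and $\tilde F$-components and no slicing identity of this kind could hold.
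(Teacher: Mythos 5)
Your overall architecture---build a candidate pair $(\tilde\nu,\tilde\sg)$ from the slices, verify the defining BV relation by Fubini together with the commutation $(\nab^*G)(x+y)=(\nab^*G_y)(x)$, then invoke the uniqueness clause of Proposition~\ref{prop:VFK}---is reasonable, and your treatment of the commutation step (including the remark that it needs $K\subset F$) matches what the paper itself uses. The genuine gap lies exactly at the two points you dismiss as routine. First, $\tilde\sg(z):=\sg_{A_{q_F(z)},F,K}(p_F(z))$ is not a well-defined function: for each $y$, $\sg_{A_y,F,K}$ is determined only up to $\|A_y\|_{F,K}$-null sets, so ``wherever defined'' hides an uncountable family of arbitrary choices of representatives, and no selection is automatically jointly (or even $\tilde\nu$-) measurable. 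Without a measurable version you cannot form $\int_E\la G,\tilde\sg\ra_K\,d\tilde\nu$, cannot assert $|\tilde\sg|_K=1$ $\tilde\nu$-a.e., and hence cannot apply uniqueness in Proposition~\ref{prop:VFK} at all. Note also that part~(ii) of the statement is precisely the claim that a consistent slice-wise version exists, so defining $\tilde\sg$ this way comes uncomfortably close to assuming what is to be proved.

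Second, the measurability of $y\mapsto\|A_y\|_{F,K}(B_y)$ does not follow from the measurability of $y\mapsto D^i_{A_y}(B_y)$ ``via the explicit formula \eqref{eq:AFK}'': that formula expresses $\|A_y\|_{F,K}$ through the total variations $|D^i_{A_y}|$ and the Radon--Nikodym derivatives $dD^i_{A_y}/d\Delta_{A_y}$, and neither of these inherits measurability in $y$ in any routine way---a monotone-class argument does give measurability of $y\mapsto D^i_{A_y}(B_y)$ (this is \eqref{eq:DAiB} in the paper), but total variation is a supremum over partitions, and the Radon--Nikodym derivatives are again only defined up to $y$-dependent null sets. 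This is where the paper's proof does its real work: it shows that the Hahn set $S^i$ of $D_A^i$ slices, for $\mu_{\tilde F}$-a.e.\ $y$, into a Hahn set of $D^i_{A_y}$---forced by a chain of equalities that uses the inequality \eqref{eq:V}---which yields $|D_A^i|(B)=\int_{\tilde F}|D^i_{A_y}|(B_y)\,\mu_{\tilde F}(dy)$; and it then constructs, via the differentiation theorem, a single jointly Borel function $\ph^i$ (a limsup of ratios of measures of small balls) that serves simultaneously as $dD_A^i/d\Delta_A$ and as $dD^i_{A_y}/d\Delta_{A_y}$ for a.e.\ $y$. These two constructions are exactly what make \eqref{eq:AFK} and \eqref{eq:sAFK} ``Fubini-compatible,'' and they are the content you would need to supply. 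Finally, observe that the same equality-forcing chain is how the paper obtains claim~(iii); in your plan, (iii) rests on \eqref{eq:AEKB}, i.e.\ on the part of (i) whose proof is the missing piece.
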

\begin{proof}
We may assume that $A$ is a Borel set.
Let $k=\dim K$.
Select an orthonormal basis $h_1,\dots, h_k$ of $K$, as in the proof of Proposition~\ref{prop:VFK}.

Let $i=1,\dots, k$.
Define $K_i$ as a one-dimensional vector space spanned by $h_i$.
We denote $V_{E,K_i}$ and $V_{F,K_i}$ by $V_{E,i}$ and $V_{F,i}$, respectively. 
Define $D_A^i$ and $D_C^i$ for $C\in\fM(F)$ with $V_{F,i}(C)<\infty$ so that the relations of the type of Eq.~\Eq{DAi1} in the proof of Proposition~\ref{prop:VFK} hold.
Note that $|D_A^i|(E)=V_{E,i}(A)$ and $|D_C^i|(F)=V_{F,i}(C)$.

From Proposition~\ref{prop:V},
\begin{equation}\label{eq:Vi}
  \int_{\tilde F}V_{F,i}(A_y)\,\mu_{\tilde F}(dy)
  \le V_{E,i}(A) \le V_{E,K}(A).
\end{equation}
Let $g\in\FCb^1(E)$ and define $G(z)=g(z)h_i$.
Then,
\begin{align}\label{eq:fubini}
\int_E g\,dD_A^i
&=\int_A \nab^* G\,d\mu\nonumber\\
&=\int_{\tilde F}\left(\int_F \bone_{A_y}(x)(\nab^*(G(\cdot+y)))(x)\,\mu_F(dx)\right)\mu_{\tilde F}(dy)\nonumber\\
&=\int_{\tilde F}\left(\int_F g(x+y)\,D_{A_y}^i(dx)\right)\mu_{\tilde F}(dy).
\end{align}
In particular, the map
\begin{equation}\label{eq:measurable}
\tilde F\ni y\mapsto 
\int_F g(x+y)\,D_{A_y}^i(dx)\in\R
\end{equation}
is $\fM(\tilde F)$-measurable.
From the domination $\int_F |g(x+y)|\,|D_{A_y}^i|(dx)\le \sup_{z\in E}|g(z)|\cdot V_{F,i}(A_y)$, Eq.~\Eq{Vi}, and the monotone class theorem, Eq.~\Eq{fubini} and the $\fM(\tilde F)$-measurability of \Eq{measurable} hold for all bounded Borel measurable functions $g$ on $E$.
In particular, for $B\in \fB(E)$, by setting $g=\bone_B$, we have
\begin{align}\label{eq:DAiB}
D_A^i(B)
&=\int_{\tilde F}\left(\int_F \bone_B(x+y)\,D_{A_y}^i(dx)\right)\mu_{\tilde F}(dy)\nonumber\\
&=\int_{\tilde F}D_{A_y}^i(B_y)\,\mu_{\tilde F}(dy),
\end{align}
and the map $\tilde F\ni y\mapsto D_{A_y}^i(B_y)\in\R$ is $\fM(\tilde F)$-measurable.

Select a Borel set $S^i$ of $E$ such that $|D_A^i|(\cdot)=D_A^i(\cdot\cap S^i)-D_A^i(\cdot\setminus S^i)$ (the Hahn decomposition).
Then,
\begin{align*}
V_{E,i}(A)
&= |D_A^i|(E)=D_A^i(S^i)-D_A^i(E\setminus S^i)\\
&=\int_{\tilde F}\left(D_{A_y}^i(S_y^i)-D_{A_y}^i(F\setminus S_y^i)\right)\,\mu_{\tilde F}(dy)\quad\mbox{(from~\Eq{DAiB})}\\
&\le \int_{\tilde F}|D_{A_y}^i|(F)\,\mu_{\tilde F}(dy)\\
&=\int_{\tilde F}V_{F,i}(A_y)\,\mu_{\tilde F}(dy)\\
&\le V_{E,i}(A)
\qquad\mbox{(from \Eq{Vi})},
\end{align*}
where $S^i_y$ is the section of $S^i$, that is, $S^i_y=\{x\in F\mid x+y\in S^i\}$.
Therefore, the inequalities in the above equations can be replaced by equalities.
In particular, there exists a $\mu_{\tilde F}$-null set $\tilde N_i$ in $\fB(\tilde F)$ such that for all $y\in\tilde F\setminus \tilde N_i$, $V_{F,i}(A_y)<\infty$ and 
\[
|D_{A_y}^i|(F)=D_{A_y}^i(S_y^i)-D_{A_y}^i(F\setminus S_y^i),
\]
which implies that $|D_{A_y}^i|(\cdot)=D_{A_y}^i(\cdot\cap S_y^i)-D_{A_y}^i(\cdot\setminus S_y^i)$; this provides the Hahn decomposition of $D_{A_y}^i$.
Then, for any $B\in\fB(E)$,
\begin{align*}
|D_A^i|(B)
&= D_A^i(B\cap S^i)-D_A^i(B\setminus S^i)\\
&=\int_{\tilde F}\left(D_{A_y}^i(B_y\cap S_y^i)-D_{A_y}^i(B_y\setminus S_y^i)\right)\mu_{\tilde F}(dy)\\
&=\int_{\tilde F}|D_{A_y}^i|(B_y)\,\mu_{\tilde F}(dy).
\end{align*}
Let $\tilde N=\bigcup_{i=1}^k \tilde N_i$.
Define $\Delta_A=\sum_{i=1}^k |D_A^i|$ and $\Delta_{A_y}=\sum_{i=1}^k |D_{A_y}^i|$, which can be defined for $y\in \tilde F\setminus \tilde N$.
Then, $\bone_{\tilde F\setminus \tilde N}(y)\cdot \Delta_{A_y}(B_y)$ is Borel measurable in $y\in\tilde F$ and
\[
  \Delta_A(B)=\int_{\tilde F}\Delta_{A_y}(B_y)\,\mu_{\tilde F}(dy).
\]
This implies that for any bounded Borel function $f$ on $E$,
\begin{equation}\label{eq:measiny}
\bone_{\tilde F\setminus \tilde N}(y)\cdot\int_F f(x+y)\, \Delta_{A_y}(dx)
\mbox{ is Borel measurable in }y\in\tilde F
\end{equation}
and
\begin{equation}\label{eq:DeltaA}
\int_E f\,d\Delta_A
=\int_{\tilde F}\left(\int_F f(x+y)\,\Delta_{A_y}(dx)\right)\mu_{\tilde F}(dy).
\end{equation}

For $z\in E$, let $x=p_F(z)\in F$ and $y=q_F(z)\in\tilde F$.
Let $B(x,r)=\{w\in F\mid |w-x|_F\le r\}$ for $r>0$, and define a function $\ph^i$ on $E$ for $i=1,\dots, k$ by
\[
\ph^i(z)=\begin{cases}\displaystyle
\limsup_{n\to\infty}\frac{D_{A_y}^i(B(x,1/n))}{D_{A_y}^i(B(x,1/n)\cap S_y^i)-D_{A_y}^i(B(x,1/n)\setminus S_y^i)}&\mbox{if }y\in \tilde F\setminus \tilde N\\
\hss 0& \mbox{if }y\in \tilde N\end{cases},
\]
where $0/0=+\infty$ by definition.
Then, from the differentiation theorem (see, e.g., \cite[Section~1.6]{EG}), for $y\in\tilde F\setminus \tilde N$ (in particular, for $\mu_{\tilde F}$-a.e.\,$y$),
$\ph^i(x+y)$ is equal to the Radon--\allowbreak Nikodym derivative $(dD_{A_y}^i/d\Delta_{A_y})(x)$ for $\Delta_{A_y}$-a.e.\,$x\in  F$.

We will prove that $\ph^i(z)$ is Borel measurable in $z\in E$.
Let $g$ be a real-valued, bounded Borel measurable function on $F\times F\times \tilde F\simeq F\times E$ such that $g(x,\cdot)\in\FCb^1(E)$ for every $x\in F$.
Define 
\[
G_x(w,y)=g(x,w,y)h_i\quad\mbox{for }(x,w,y)\in F\times F\times \tilde F\simeq F\times E.
\]
Then, for $x\in F$ and $y\in\tilde F\setminus \tilde N$,
\begin{align*}
\int_F g(x,w,y)\,dD_{A_y}^i(dw)
&=\int_{A_y}(\nab^* (G_x(\cdot,y)))(w)\,\mu_F(dw)\\
&=\int_F\bone_{A}(w+y)(\nab^* G_x)(w,y)\,\mu_F(dw).
\end{align*}
From the Fubini theorem, the map 
\[
F\times \tilde F\ni(x,y)\mapsto \bone_{\tilde F\setminus \tilde N}(y)\int_F g(x,w,y)\,dD_{A_y}^i(dw)\in\R
\]
is Borel measurable.
By the monotone class theorem,
this measurability holds for any bounded Borel measurable function $g$. 
By letting 
\begin{align*}
g(x,w,y)&=\bone_{B(x,1/n)}(w),\\
g(x,w,y)&=\bone_{B(x,1/n)}(w) \bone_{S^i}(w+y),
\mbox{ and}\\
g(x,w,y)&=\bone_{B(x,1/n)}(w) \bone_{E\setminus S^i}(w+y),
\end{align*}
we show that 
\begin{align*}
&\bone_{\tilde F\setminus \tilde N}(y)D_{A_y}^i(B(x,1/n)),\\
&\bone_{\tilde F\setminus \tilde N}(y)D_{A_y}^i(B(x,1/n)\cap S_y^i), \mbox{ and}\\
&\bone_{\tilde F\setminus \tilde N}(y)D_{A_y}^i(B(x,1/n)\setminus S_y^i)
\end{align*}
are all Borel measurable in $(x,y)\in F\times\tilde F$.
Therefore, $\ph^i(z)$ is Borel measurable in $z\in E$.

Now, for any $B\in\fB(E)$,
\begin{align*}
D_A^i(B)&=\int_{\tilde F}D_{A_y}^i(B_y)\,\mu_{\tilde F}(dy)
\qquad \mbox{(from \Eq{DAiB})}\\
&=\int_{\tilde F}\left(\int_{B_y} \ph^i(x+y)\,\Delta_{A_y}(dx)\right)\mu_{\tilde F}(dy)\\
&=\int_B \ph^i\,d\Delta_A.
\qquad \mbox{(from \Eq{DeltaA})}
\end{align*}
Therefore, $\ph^i$ is equal to the Radon--Nikodym derivative $dD_A^i/d\Delta_A$.

From the construction of $\|A\|_{E,K}$ and $\|A_y\|_{F,K}$ by \Eq{AFK}, we have
\begin{align*}
  \|A\|_{E,K}(dz)&=\sqrt{\sum_{j=1}^k \ph^j(z)^2}\,\Delta_A(dz)
\intertext{and}
  \|A_y\|_{F,K}(dx)&=\sqrt{\sum_{j=1}^k \ph^j(x+y)^2}\,\Delta_{A_y}(dx),\quad y\in\tilde F\setminus \tilde N.
\end{align*}
By combining this with \Eq{measiny} and \Eq{DeltaA}, we prove that claim~(i) holds.
From expression \Eq{sAFK}, we have
\begin{align*}
\sg_{A,E,K}(z)&=\begin{cases}\displaystyle
\sum_{i=1}^k\frac{\ph^i(z)}{\sqrt{\sum_{j=1}^k\ph^j(z)^2}}h_i
& \mbox{if }\displaystyle\sum_{j=1}^k\ph^j(z)^2\ne0\\
\hss 0
& \mbox{if }\displaystyle\sum_{j=1}^k\ph^j(z)^2=0
\end{cases},\\
\sg_{A_y,F,K}(x)&=\begin{cases}\displaystyle
\sum_{i=1}^k\frac{\ph^i(x+y)}{\sqrt{\sum_{j=1}^k\ph^j(x+y)^2}}h_i
& \mbox{if }\displaystyle\sum_{j=1}^k\ph^j(x+y)^2\ne0\\
\hss 0
& \mbox{if }\displaystyle\sum_{j=1}^k\ph^j(x+y)^2=0
\end{cases}
\quad\mbox{for }y\in\tilde F\setminus \tilde N.
\end{align*}
Therefore, claim~(ii) follows.
We obtain (iii) by letting $B=E$ in \Eq{AEKB}.
\end{proof}
\begin{prop}\label{prop:finite}
Let $A\in\fM(E)$ with $V_E(A)<\infty$.
Denote the orthogonal projection operator from $H$ to $F$ by $\pi_F$.
Then, $\sg_{A,E,F}(z)\,\|A\|_{E,F}(dz)=\pi_F\sg_A(z)\,\|A\|_E(dz)$.
In particular,
\begin{align*}
 \|A\|_{E,F}(dz)&=|\pi_F\sg(z)|_F\,\|A\|_E(dz),\\
  \sg_{A,E,F}(z)&=\begin{cases}\displaystyle
  \frac{\pi_F\sg_A(z)}{|\pi_F\sg_A(z)|_F}&\mbox{if } \pi_F\sg_A(z)\ne0\\
  \hss 0&\mbox{if } \pi_F\sg_A(z)=0
  \end{cases}
  \quad \mbox{for $\|A\|_{E,F}$-a.e.\,$z\in E$},
\end{align*}
and for every $B\in\fB(E)$, $\|A\|_{E,F_m}(B)$ increases to $\|A\|_E(B)$ as $m\to\infty$, where $\{F_m\}_{m=1}^\infty$ is defined as in \Eq{Fm}.
\end{prop}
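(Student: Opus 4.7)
The plan is to derive all the stated identities directly from the integration-by-parts characterizations, using the uniqueness clauses of Theorem~\ref{th:FH} and Proposition~\ref{prop:VFK}. First I would note that any $G\in\FCb^1(E\to F)$ automatically lies in $\FCb^1(E\to E^*)$ with $|G(z)|_F=|G(z)|_H$, so $V_{E,F}(A)\le V_E(A)<\infty$; hence Proposition~\ref{prop:VFK}, applied with ambient Wiener space $(E,H,\mu)$ and finite-dimensional target $K=F$, produces $\|A\|_{E,F}$ and $\sg_{A,E,F}$ realizing the integration-by-parts identity for $F$-valued test functions. On the other hand, because $G(z)\in F\subset H$ we have $\la G(z),\sg_A(z)\ra_H=\la G(z),\pi_F\sg_A(z)\ra_F$, so Theorem~\ref{th:FH} specializes to
\[
\int_A \nab^* G\,d\mu=\int_E \la G,\pi_F\sg_A\ra_F\,d\|A\|_E\qquad\text{for every }G\in\FCb^1(E\to F).
\]

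Comparing with the identity from Proposition~\ref{prop:VFK} gives
\[
\int_E \la G,\sg_{A,E,F}\ra_F\,d\|A\|_{E,F}=\int_E \la G,\pi_F\sg_A\ra_F\,d\|A\|_E
\]
for all $G\in\FCb^1(E\to F)$. Testing with $G(z)=g(z)h_i$ as $g$ ranges over $\FCb^1(E)$ and $\{h_1,\dots,h_m\}$ is an orthonormal basis of $F$, then extending by a standard monotone-class argument to bounded Borel $g$, identifies $\la h_i,\sg_{A,E,F}\ra_F\,\|A\|_{E,F}$ with $\la h_i,\pi_F\sg_A\ra_F\,\|A\|_E$ as signed Borel measures for each $i$; equivalently, the $F$-valued vector measures coincide:
\[
\sg_{A,E,F}(z)\,\|A\|_{E,F}(dz)=\pi_F\sg_A(z)\,\|A\|_E(dz).
\]
Taking $F$-norms (i.e., passing to total variation) and using $|\sg_{A,E,F}|_F=1$, $\|A\|_{E,F}$-a.e., yields $\|A\|_{E,F}(dz)=|\pi_F\sg_A(z)|_F\,\|A\|_E(dz)$; substituting back and dividing on the set $\{\pi_F\sg_A\ne 0\}$ produces the claimed explicit formula for $\sg_{A,E,F}$.

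For the final claim, I would observe that $F_m\subset F_{m+1}$ implies
\[
|\pi_{F_m}\sg_A(z)|_F^2=\sum_{i=1}^m \la l_i,\sg_A(z)\ra_H^2
\]
is nondecreasing in $m$ and converges pointwise to $|\sg_A(z)|_H^2=1$ (for $\|A\|_E$-a.e.\ $z$) since $\{l_i\}_{i=1}^\infty$ is a complete orthonormal system of $H$. The monotone convergence theorem then gives
\[
\|A\|_{E,F_m}(B)=\int_B |\pi_{F_m}\sg_A|_F\,d\|A\|_E\;\uparrow\;\int_B d\|A\|_E=\|A\|_E(B)
\]
for every $B\in\fB(E)$.

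The only non-routine step is the vector-measure identification, and even that is essentially bookkeeping: once the scalar identities against all $G\in\FCb^1(E\to F)$ are in hand, the uniqueness built into Proposition~\ref{prop:VFK} (together with the density of $\FCb^1(E)$ in bounded Borel functions via monotone classes) does the rest. The remaining assertions then follow by taking norms and invoking monotone convergence, so I expect no substantive analytic obstacle beyond verifying the domination and measurability needed to apply the monotone class theorem to the linear functionals appearing above.
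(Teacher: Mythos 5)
Your proposal is correct, and its core is the same as the paper's: both arguments come down to the identity
\[
\int_E \la G,\sg_{A,E,F}\ra_F\,d\|A\|_{E,F}
=\int_E \la G,\pi_F\sg_A\ra_F\,d\|A\|_E,
\qquad G\in\FCb^1(E\to F),
\]
obtained by equating two integration-by-parts formulas and using that $\la G,\sg_A\ra_H=\la G,\pi_F\sg_A\ra_F$ for $F$-valued $G$. The one place you genuinely diverge is how the left-hand side is connected to $\int_A\nab^* G\,d\mu$: you invoke the defining relation \Eq{caci} of Proposition~\ref{prop:VFK} directly, with ambient space $E$ and target $K=F$ (legitimate, since any $G\in\FCb^1(E\to F)$ with $|G|_F\le1$ lies in $\FCb^1(E\to E^*)$ with $|G|_H\le1$, so $V_{E,F}(A)\le V_E(A)<\infty$), whereas the paper re-derives that same relation through the slicing machinery, namely Proposition~\ref{prop:slice} parts (i)--(ii) together with the finite-dimensional identity on the sections $A_y$ and Fubini. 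Your route is shorter and bypasses Proposition~\ref{prop:slice} entirely; the paper's detour is logically redundant for this step but keeps the computation inside the disintegration framework it has just built. Beyond that, you spell out what the paper compresses into ``This proves the assertion'': testing with $G=g\,h_i$ and a monotone class argument to identify the two $F$-valued vector measures, passing to total variations via $|\sg_{A,E,F}|_F=1$ $\|A\|_{E,F}$-a.e.\ to get $\|A\|_{E,F}=|\pi_F\sg_A|_F\,\|A\|_E$ (note the set $\{\pi_F\sg_A=0\}$ is then $\|A\|_{E,F}$-null, so the formula for $\sg_{A,E,F}$ holds a.e.), and Parseval plus monotone convergence, using $|\sg_A|_H=1$ $\|A\|_E$-a.e., for $\|A\|_{E,F_m}(B)\uparrow\|A\|_E(B)$. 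All of these steps are sound.
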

\begin{proof}
Let $G\in\FCb^1(E\to F)$.
Then, from Proposition~\ref{prop:slice} with $K=F$,
\begin{align*}
\int_E \la G,\sg_{A,E,F}\ra_F\,d\|A\|_{E,F}
&=\int_{\tilde F}\left(\int_F\la G(x+y),\sg_{A,E,F}(x+y)\ra_F\,\|A_y\|_{F,F}(dx)\right)\mu_{\tilde F}(dy)\\
&=\int_{\tilde F}\left(\int_F(\nab^* (G(\cdot+y)))(x)\bone_{A_y}(x)\,\mu_F(dx)\right)\mu_{\tilde F}(dy)\\
&=\int_E \nab^*G\cdot\bone_A\,d\mu\\
&=\int_E\la G,\sg_A\ra_H\,d\|A\|_E\\
&=\int_E\la G,\pi_F\sg_A\ra_F\,d\|A\|_E.
\end{align*}
This proves the assertion.
\end{proof}
Let $m=\dim F$.
For a subset $A$ of $F$, we define the measure-theoretic boundary $\partial_\star A$ of $A$ in $F$ by replacing $\R^m$ with $F$ in \Eq{mtb}.\begin{prop}\label{prop:finitedim}
Let $A\in\fM(F)$ satisfy $V_{F,F}(A)<\infty$.
Then, $\|A\|_{F,F}$ is equal to the one-codimensional Hausdorff--Gauss measure $\te_F^{m-1}$ restricted on $\partial_\star A$, and $\sg_{A,F,F}$ is equal to the $\sg$ obtained by replacing $U$ and $\R^m$ in \Eq{ibp2} with $F$.
\end{prop}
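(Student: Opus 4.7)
The strategy is to reduce the statement to the classical Euclidean structure theorem (Theorems~\ref{th:structure} and~\ref{th:measuretheoretic}) via the intertwining identity
\[
\ph(x)\,\nab^*G(x)=-\dv(\ph G)(x),\qquad G\in C_b^1(F\to F),
\]
where $\ph(x)=(2\pi)^{-m/2}\exp(-|x|_F^2/2)$ is the density of $\mu_F$ with respect to the $m$-dimensional Lebesgue measure $\cL^m$ on $F$; this follows from the product rule together with $\nab\ph=-x\ph$.

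The first step is to show that $A$ has locally finite perimeter on $F$ in the classical Euclidean sense. For any $G\in C_c^1(F\to F)$ supported in $B(0,R)$ with $|G|_F\le 1$, the field $G/\ph$ lies in $C_c^1(F\to F)$ with $\sup_x|G(x)/\ph(x)|_F\le(2\pi)^{m/2}e^{R^2/2}$, and the intertwining identity applied to $G/\ph$ yields
\[
-\int_A\dv G\,d\cL^m=\int_A\nab^*(G/\ph)\,d\mu_F\le (2\pi)^{m/2}e^{R^2/2}\,V_{F,F}(A)<\infty.
\]
Consequently $A$ is Caccioppoli in $F$, and Theorems~\ref{th:structure} and~\ref{th:measuretheoretic} supply a locally finite Radon measure $\cH^{m-1}|_{\partial_\star A}$ together with a Borel unit normal $\sg$ on $\partial_\star A$ for which the Gauss--Green formula \Eq{ibp2} holds in $F$.

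Next, for $G\in C_c^1(F\to F)$, applying the classical formula to $\ph G\in C_c^1(F\to F)$ and invoking the intertwining identity gives
\[
\int_A\nab^*G\,d\mu_F=-\int_A\dv(\ph G)\,d\cL^m=\int_{\partial_\star A}\la G,\sg\ra_F\,\ph\,d\cH^{m-1}.
\]
Since $\partial^\star A$ is $(m-1)$-rectifiable by Theorem~\ref{th:structure}(ii) and $\cH^{m-1}(\partial_\star A\setminus\partial^\star A)=0$ by Theorem~\ref{th:measuretheoretic}, on $\partial_\star A$ the spherical and standard one-codimensional Hausdorff measures coincide (as noted in the excerpt after \Eq{SFeps}, via Federer's comparison theorem); hence $\ph\cdot\cH^{m-1}|_{\partial_\star A}=\te_F^{m-1}|_{\partial_\star A}$ and the formula becomes
\[
\int_A\nab^*G\,d\mu_F=\int_F\la G,\sg\ra_F\,d(\te_F^{m-1}|_{\partial_\star A}),\qquad G\in C_c^1(F\to F).
\]

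To finish, I compare with Proposition~\ref{prop:VFK}. The $F$-valued Borel measures $\sg_{A,F,F}\,d\|A\|_{F,F}$ and $\sg\,d(\te_F^{m-1}|_{\partial_\star A})$ induce the same continuous linear functional on $C_c^1(F\to F)$, hence on $C_c(F\to F)$ by density. The first has finite total variation $V_{F,F}(A)$; approximating $\bone_K\sg$ by $C_c^1$ vector fields of norm at most one shows $\te_F^{m-1}(K)\le V_{F,F}(A)$ for every compact $K\subset\partial_\star A$, so the second is also a finite Borel vector measure. Equality of the induced functionals on $C_c$ then forces equality of the vector measures, and taking total variations (using $|\sg_{A,F,F}|_F=|\sg|_F=1$ almost everywhere) yields $\|A\|_{F,F}=\te_F^{m-1}|_{\partial_\star A}$ and $\sg_{A,F,F}=\sg$ $\|A\|_{F,F}$-almost everywhere. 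The main obstacle is the identification of spherical and standard Hausdorff measures on $\partial_\star A$: this is not evident from the definitions but is underwritten by the rectifiable structure of the reduced boundary together with Federer's comparison theorem; the remainder amounts to careful bookkeeping around the intertwining identity $\ph\nab^*G=-\dv(\ph G)$.
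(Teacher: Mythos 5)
Your proposal is correct and follows essentially the same route as the paper: the same intertwining identity $\ph\,\nab^*G=-\dv(\ph G)$ (the paper writes it as $\dv G=-(\nab^*(G\cdot\xi^{-1}))\xi$), the same reduction to classical locally finite perimeter so that Theorems~\ref{th:structure} and~\ref{th:measuretheoretic} apply, the same appeal to rectifiability and Federer's comparison theorem to identify $\cS_F^{m-1}$ with $\cH_F^{m-1}$ on $\partial_\star A$, and the same concluding identification with $\|A\|_{F,F}$ via uniqueness in Proposition~\ref{prop:VFK}. Your final step is in fact slightly more careful than the paper's, since you spell out the finiteness of $\te_F^{m-1}|_{\partial_\star A}$ and the passage from $C_c^1$ to arbitrary bounded test fields via equality of finite vector measures, which the paper leaves implicit.
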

\begin{proof}
Define
\[
\xi(x)=(2\pi)^{-m/2}\exp(-|x|_F^2/2),
\quad x\in F.
\]
Then, for $G\in C_c^1(F\to F)$ and $f\in C_c^1(F)$,
\begin{align*}
\int_F (\dv G)f\,d\cL^m
&=-\int_F\la G,\nab f\ra_F\,d\cL^m\\
&=-\int_F\la G\cdot \xi^{-1},\nab f\ra_F\,d\mu_F\\
&=-\int_F(\nab^*( G\cdot \xi^{-1})) f\,d\mu_F\\
&=-\int_F(\nab^*( G\cdot \xi^{-1}))\xi f\,d\cL^m.
\end{align*}
Therefore, $\dv G=-(\nab^*( G\cdot \xi^{-1}))\xi$.
This implies that $A$ has a locally finite perimeter in $F$ (with respect to the Lebesgue measure) in the following sense: for any bounded domain $U$ in $F\simeq\R^m$,
\[
\sup\left\{\left.\int_A (\dv G)\,d\cL^m\,\right|\,
G\in C_c^1(U\to F),\ |G(x)|_F\le1
\mbox{ for all }x\in U\right\}
<\infty.
\]
For such a set, Theorems~\ref{th:structure} and \ref{th:measuretheoretic} hold.
(See, e.g., Section~5.7.3, Theorem~2 and Section~5.8, Lemma~1 in \cite{EG}.)
In particular, the measure-theoretic boundary $\partial_\star A$ in $F$ is equal to a countable union of compact subsets of $C^1$-surfaces in $F$, up to an $\cH_F^{m-1}$-null set.
Here, $\cH_F^{m-1}$ is the $(m-1)$-dimensional Hausdorff measure on $F$.
Thus, any subset $B$ of $\partial_\star A$ with $\cH_F^{m-1}(B)<\infty$ is $(\cH_F^{m-1},m-1)$-rectifiable in the sense of \cite[Section~3.2.14]{Fed}.
From \cite[Theorem~3.2.26]{Fed}, $\cH_F^{m-1}(B)=\cS_F^{m-1}(B)$.
In other words, $\cH_F^{m-1}$ and $\cS_F^{m-1}$ coincide as (outer) measures on $\partial_\star A$.
Then, for $G\in C_c^1(F\to F)$,
\begin{align*}
\int_{\partial_\star A}\la G,\sg\ra_F\,d\te_F^{m-1}
&=\int_{\partial_\star A}\la G\xi,\sg\ra_F\,d\cH_F^{m-1}\\
&=-\int_A\dv(G\xi)\,d\cL^m
\quad\mbox{(from \Eq{ibp2} and Theorem~\ref{th:measuretheoretic})}\\
&=\int_A \nab^* G\,d\mu_F.\qquad(\mbox{because }\dv(G\xi)=-(\nab^* G)\xi)
\end{align*}
Therefore, $\|A\|_{F,F}$ is equal to the measure $\te_F^{m-1}$ restricted on $\partial_\star A$, and $\sg_{A,F,F}=\sg$.
\end{proof}
\begin{proof}[Proof of Theorem~\ref{th:main}]
Let $k,m\in\N$ with $m>k$.
Denote the linear span of $\{l_{k+1},l_{k+2},\dots,l_m\}$ by $F_m\ominus F_k$.
For $y_m\in \tilde F_m$ and $x\in F_m\ominus F_k$, let 
\[
(A_{y_m})_x=\{w\in F_k\mid x+w\in A_{y_m}\}\,(=\{w\in F_k\mid x+w+y_m\in A\}).
\]
Since $V_{F_m,F_m}(A_{y_m})<\infty$ for $\mu_{\tilde F_m}$-a.e.\,$y_m\in \tilde F_m$, for such $y_m$ and any $C\in\fB(F_m)$, we have
\[
\int_{F_m\ominus F_k}\|(A_{y_m})_x\|_{F_k,F_k}(C_x)\,\mu_{F_m\ominus F_k}(dx)
=\|A_{y_m}\|_{F_m,F_k}(C)\le \|A_{y_m}\|_{F_m,F_m}(C),
\]
by applying Proposition~\ref{prop:slice} to the abstract Wiener space $(F_m,F_m,\mu_{F_m})$.
By taking $C=(E\setminus \partial_\star^{F_m}A)_{y_m}\,(=F_m\setminus\partial_\star(A_{y_m}))$, from Proposition~\ref{prop:finitedim}, we have
\[
  0=\|A_{y_m}\|_{F_m,F_m}(C)
  \ge \int_{F_m\ominus F_k}\|(A_{y_m})_x\|_{F_k,F_k}(C_x)\,\mu_{F_m\ominus F_k}(dx).
\]
Then, we have
\begin{align*}
0&=\int_{\tilde F_m}\left(\int_{F_m\ominus F_k}\|(A_{y_m})_x\|_{F_k,F_k}(((E\setminus \partial_\star^{F_m}A)_{y_m})_x)\,\mu_{F_m\ominus F_k}(dx)\right)\mu_{\tilde F_m}(dy_m)\\
&=\int_{\tilde F_k}\|A_{y_k}\|_{F_k,F_k}((E\setminus\partial_\star^{F_m}A)_{y_k})\,\mu_{\tilde F_k}(dy_k).
\end{align*}
Therefore, for $\mu_{\tilde F_k}$-a.e.\,$y_k\in \tilde F_k$, $\partial_\star(A_{y_k})\subset (\partial_\star^{F_m}A)_{y_k}$ up to a $\|A_{y_k}\|_{F_k,F_k}$-null set, where $\partial_\star(A_{y_k})$ is the measure-theoretic boundary of $A_{y_k}$ in $F_k$.
By taking $\liminf_{m\to\infty}$,
\[
\partial_\star(A_{y_k})\subset \liminf_{m\to\infty}(\partial_\star^{F_m}A)_{y_k}
=\left(\liminf_{m\to\infty}\partial_\star^{F_m}A\right)_{y_k}
=(\partial_\star A)_{y_k}
\]
up to a $\|A_{y_k}\|_{F_k,F_k}$-null set.

Let $B\in\fB(E)$.
For $\mu_{\tilde F_k}$-a.e.\,$y_k\in\tilde F_k$, we have
\begin{align*}
\|A_{y_k}\|_{F_k,F_k}(B_{y_k})
&=\te_{F_k}^{k-1}(\partial_\star(A_{y_k})\cap B_{y_k})\qquad\mbox{(from Proposition~\ref{prop:finitedim})}\\
&\le\te_{F_k}^{k-1}\left((\partial_\star A)_{y_k}\cap B_{y_k}\right)\\
&=\te_{F_k}^{k-1}\left(((\partial_\star A)\cap B)_{y_k}\right).
\end{align*}
Integrating both sides with respect to $\mu_{\tilde F_k}(dy_k)$ and applying Proposition~\ref{prop:slice} with $K=F_k$, we have
\begin{equation}\label{eq:star1}
\|A\|_{E,F_k}(B)\le\rho_{F_k}((\partial_\star A)\cap B).
\end{equation}

On the other hand, by applying Proposition~\ref{prop:nondecreasing} with $(E,H,\mu)=(F_m,F_m,\mu_{F_m})$, for $\mu_{\tilde F_m}$-a.e.\,$y_m\in\tilde F_m$, we have
\begin{align*}
\int_{F_m\ominus F_k}\te_{F_k}^{k-1}((\partial_\star(A_{y_m})\cap B_{y_m})_x)\,\mu_{F_m\ominus F_k}(dx)
&\le \te_{F_m}^{m-1}(\partial_\star(A_{y_m})\cap B_{y_m})\\
&=\|A_{y_m}\|_{F_m,F_m}(B_{y_m}).
\end{align*}
Here, $(\partial_\star(A_{y_m})\cap B_{y_m})_x=\{w\in F_k\mid x+w\in \partial_\star(A_{y_m})\cap B_{y_m}\}$.
Then,
\begin{align*}
\rho_{F_k}((\partial_\star^{F_m}A)\cap B)
&=\int_{\tilde F_k}\te_{F_k}^{k-1}(((\partial_\star^{F_m}A)\cap B)_{y_k})\,\mu_{\tilde F_k}(dy_k)\\
&=\int_{\tilde F_m}\left(\int_{F_m\ominus F_k}\te_{F_k}^{k-1}((\partial_\star(A_{y_m})\cap B_{y_m})_x)\,\mu_{F_m\ominus F_k}(dx)\right)\mu_{\tilde F_m}(dy_m)\\
&\le\int_{\tilde F_m}\|A_{y_m}\|_{F_m,F_m}(B_{y_m})\,\mu_{\tilde F_m}(dy_m)\\
&= \|A\|_{E,F_m}(B) \qquad\mbox{(from Proposition~\ref{prop:slice})}\\
&\le \|A\|_E(B).
\end{align*}
From the Fatou lemma, we obtain
\begin{equation}\label{eq:star2}
\rho_{F_k}((\partial_\star A)\cap B)
\le
\liminf_{m\to\infty}\rho_{F_k}((\partial_\star^{F_m}A)\cap B)
\le \|A\|_E(B).
\end{equation}

From \Eq{star1} and \Eq{star2} and by letting $k\to\infty$, we have 
\[
\|A\|_E(B)\le\rho((\partial_\star A)\cap B)
\le \|A\|_E(B)
\]
by Proposition~\ref{prop:finite}.
Therefore, $\|A\|_E(B)=\rho((\partial_\star A)\cap B)$ for all $B\in\fB(E)$.
The final claim in Theorem~\ref{th:main} follows from the standard argument.
\end{proof}
\section{Concluding remarks}
\subsection{Remarks on $\|A\|_E$ and $\sg_A$}
Let $A$ be a subset of $E$ that has a finite perimeter. 
To state a further property of $\|A\|_E$, we recall the notion of Sobolev spaces and capacities on $E$ in the sense of the Malliavin calculus.
Let $K$ be a separable Hilbert space.
Let $\cP(E)$ be the set of all real-valued functions $u$ on $E$ that is expressed as $u(z)=g(h_1(z),\ldots,h_n(z))$ for some $n\in\N$, $h_1,\ldots,h_n\in E^*$, and some polynomial $g$ on $\R^n$.
Denote by $\cP(E\to K)$ the linear span of $\{u(\cdot)k\mid u\in \cP(E),\ k\in K\}$.
For $r\ge0$ and $p>1$, the $(r,p)$-Sobolev space $\D^{r,p}(E\to K)$ on $E$ is defined as the completion of $\cP(E\to K)$ by the (semi-)norm $\|\cdot\|_{r,p}$ defined by $\|f\|_{r,p}=\left(\int_E |(I-L)^{r/2}f|^p\,d\mu\right)^{1/p}$, where $L=-\nab^*\nab$ is the Ornstein--Uhlenbeck operator.
For $f\in \D^{r,p}(E\to K)$, $\|f\|_{r,p}$ is defined by continuity.
We denote $\D^{r,p}(E\to \R)$ by $\D^{r,p}(E)$.
The $(r,p)$-capacity $C_{r,p}$ on $E$ is defined as
\[
C_{r,p}(U)=\inf\{\|f\|_{r,p}^p\mid f\in \D^{r,p}(E)\mbox{ and }f\ge 1\ \mu\mbox{-a.e.\ on }U\}
\]
when $U$ is an open set of $E$ and
\[
C_{r,p}(B)=\inf\{C_{r,p}(U)\mid \mbox{$U$ is open and }B\subset U\}
\]
for a general $B\subset E$.
Then, from \cite[Theorem~4.4]{Fe01}, the $n$-codimensional Gauss--Hausdorff measure does not charge any set of $C_{r,p}$-null set if $rp>n$.
Therefore, by combining this fact with Theorem~\ref{th:main}, we have the following claims.
\begin{prop}\label{prop:smoothness}
Suppose that $p>1$ and $rp>1$.
Then, the measure $\|A\|_E$ does not charge any $C_{r,p}$-null set.
\end{prop}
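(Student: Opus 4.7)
The proof is essentially a one-line application of Theorem~\ref{th:main} combined with the external smoothness result \cite[Theorem~4.4]{Fe01}, which is exactly the setup anticipated in the paragraph immediately preceding the proposition. My plan is therefore to make this composition explicit and handle the mild measurability issues that arise because a $C_{r,p}$-null set need not be Borel.

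First I would reduce to the Borel case. Given any set $N \subset E$ with $C_{r,p}(N)=0$, the definition of $C_{r,p}$ via open hulls guarantees a decreasing sequence of open sets $U_n \supset N$ with $C_{r,p}(U_n)\to 0$; taking $\tilde N = \bigcap_n U_n$ yields a Borel (indeed $G_\delta$) set containing $N$ with $C_{r,p}(\tilde N)=0$. By the cited Feyel theorem applied with $n=1$, and using the hypothesis $rp>1$, one has $\rho(\tilde N)=0$, where $\rho$ is the one-codimensional Hausdorff--Gauss measure of Definition~\ref{def:Hausdorff}.

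Next I would invoke Theorem~\ref{th:main} with $B=\tilde N$, obtaining
\[
 \|A\|_E(\tilde N) \;=\; \rho(\tilde N \cap \partial_\star A) \;\le\; \rho(\tilde N) \;=\; 0.
\]
Since $\|A\|_E$ is a finite Borel measure, one then concludes by monotonicity (passing to the completion if one wishes to evaluate $\|A\|_E$ on $N$ itself) that $\|A\|_E(N)=0$. This is the required conclusion.

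There is no real obstacle here: the measurability reduction is standard, Feyel's result does all the analytic work on the Hausdorff--Gauss side, and Theorem~\ref{th:main} transports that information to $\|A\|_E$. The only point worth emphasizing in the write-up is why the exponent $n$ in Feyel's theorem is taken to be $1$, namely because $\rho$ is by definition the \emph{one}-codimensional Hausdorff--Gauss measure, so the threshold $rp>n$ collapses to the proposition's assumption $rp>1$.
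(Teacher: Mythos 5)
Your proposal is correct and follows exactly the paper's own argument: the paper likewise proves Proposition~\ref{prop:smoothness} by combining \cite[Theorem~4.4]{Fe01} (with $n=1$, so the threshold is $rp>1$) with Theorem~\ref{th:main}. Your explicit reduction of a general $C_{r,p}$-null set to a Borel $G_\delta$ null set is a harmless elaboration of a step the paper leaves implicit.
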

This proposition has been proved in \cite{Hi04} (Proposition~4.6 and Remark~4.7) by using a different method.
Such a smoothness property of $\|A\|_E$ is important for the study of the stochastic analysis on $A$; refer to \cite{FH01} for further details on this topic.

A $K$-valued function $G$ on $E$ is called $C_{r,p}$-{\em quasicontinuous} if for any $\eps>0$, there exists an open set $U\subset E$ such that $C_{r,p}(U)<\eps$ and $G|_{E\setminus U}$ is continuous.
If $G=\tilde G$ $\mu$-a.e.\ and $\tilde G$ is $C_{r,p}$-quasicontinuous, we say that $\tilde G$ is a $C_{r,p}$-quasicontinuous modification of $G$.
In the manner similar to the proof of \cite[Lemma~4.3]{FH01}, it is not difficult to prove that every $G\in \D^{r,p}(E\to K)$ has a $C_{r,p}$-quasicontinuous modification $\tilde G$, and if a sequence $\{G_n\}_{n=1}^\infty$ converges to $G$ in $\D^{r,p}(E\to K)$, then there exists some $\{n_k\}\uparrow\infty$ such that $\tilde G_{n_k}$ converges to $\tilde G$ pointwise outside some $C_{r,p}$-null set.  
Using these facts, we can prove the following corollary.
\begin{cor}
For any $p>1$, Eq.~\Eq{bv3} is valid for any $G\in \D^{1,p}(E\to H)\cap L^\infty(E\to H)$, where $G$ in the right-hand side of \Eq{bv3} should be replaced by the $C_{1,p}$-quasicontinuous modification $\tilde G$.
\end{cor}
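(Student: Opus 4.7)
The plan is a density argument: Eq.\,\Eq{bv3} is already known for $G\in\FCb^1(E\to E^*)$, and I would extend it to $\D^{1,p}(E\to H)\cap L^\infty(E\to H)$ by approximation, leveraging the smoothness of $\|A\|_E$ supplied by Proposition~\ref{prop:smoothness}.

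First I would produce a sequence $\{G_n\}_{n=1}^\infty\subset\FCb^1(E\to E^*)$ satisfying $G_n\to G$ in $\D^{1,p}(E\to H)$ together with a uniform pointwise bound $|G_n(z)|_H\le M$, where $M:=\|G\|_{L^\infty(E\to H)}$. Density of $\FCb^1(E\to E^*)$ in $\D^{1,p}(E\to H)$ is standard in the Malliavin calculus, but a preliminary approximation $G_n^\circ$ need not be bounded. The idea is to compose with the radial retraction $\phi_M\colon H\to H$ defined by $\phi_M(h)=h$ if $|h|_H\le M$ and $\phi_M(h)=Mh/|h|_H$ otherwise: this map is $1$-Lipschitz from $H$ to $H$, satisfies $\phi_M\circ G=G$ $\mu$-a.e., and composition with it is continuous on $\D^{1,p}(E\to H)$ by the Lipschitz chain rule for Hilbert-valued Wiener--Sobolev functions. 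A subsequent mollification returns $\phi_M(G_n^\circ)$ to $\FCb^1(E\to E^*)$ without enlarging the $H$-norm. This truncation--remollification step is the main technical obstacle; everything afterward is routine.

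For each such $G_n$ we have $\tilde G_n=G_n$ by continuity, and Eq.\,\Eq{bv3} reads
\[
 \int_A(\nab^* G_n)\,d\mu=\int_{\partial_\star A}\la G_n,\sg_A\ra_H\,d\rho.
\]
I would pass to the limit in both sides. On the left, Meyer's inequalities make $\nab^*\colon\D^{1,p}(E\to H)\to L^p(\mu)$ continuous for $p>1$, so $\nab^* G_n\to \nab^* G$ in $L^p(\mu)$, and pairing with $\bone_A\in L^{p/(p-1)}(\mu)$ gives convergence to $\int_A(\nab^* G)\,d\mu$.

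On the right, the quasicontinuous-modification result cited just before the corollary supplies a subsequence $\{G_{n_k}\}$ with $G_{n_k}=\tilde G_{n_k}\to\tilde G$ pointwise outside a $C_{1,p}$-null set $N$. Since $p>1$ and $1\cdot p>1$, Proposition~\ref{prop:smoothness} gives $\|A\|_E(N)=0$, so $G_{n_k}\to\tilde G$ holds $\|A\|_E$-a.e., equivalently $\rho$-a.e.\ on $\partial_\star A$ by Theorem~\ref{th:main}. Combined with the uniform bound $|G_{n_k}|_H\le M$, the normalization $|\sg_A|_H=1$ $\|A\|_E$-a.e., the Cauchy--Schwarz bound $|\la G_{n_k},\sg_A\ra_H|\le M$, and the finiteness of $\|A\|_E$ from Theorem~\ref{th:FH}, the bounded convergence theorem yields
\[
 \int_{\partial_\star A}\la G_{n_k},\sg_A\ra_H\,d\rho\to\int_{\partial_\star A}\la\tilde G,\sg_A\ra_H\,d\rho.
\]
Passing to the limit in the displayed identity completes the proof.
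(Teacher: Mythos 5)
Your proposal is correct and follows essentially the same route as the paper's proof: Meyer equivalence to handle the left-hand side, approximation of $G$ by uniformly bounded functions in $\FCb^1(E\to E^*)$ converging both in $\D^{1,p}(E\to H)$ and pointwise $C_{1,p}$-quasi-everywhere, and then Proposition~\ref{prop:smoothness} together with bounded convergence on the right-hand side. The truncation-and-remollification step you spell out (radial retraction $\phi_M$ plus mollification) is precisely what the paper compresses into the phrase ``from a standard procedure,'' so you have simply made explicit what the paper leaves implicit.
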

\begin{proof}
From the Meyer equivalence, $\nab^*$ extends to a continuous map from $\D^{1,p}(E\to H)$ to $L^p(E)$, and $\nab$ extends to a continuous map from $\D^{1,p}(E\to H)$ to $L^p(E\to H\otimes H)$; further, $\left\{\int_E(|f|_H^p+|\nab f|_{H\otimes H}^p)\,d\mu\right\}^{1/p}$ provides a norm on $\D^{1,p}(E\to H)$ that is equivalent to $\|\cdot\|_{1,p}$.
From a standard procedure,
we can take a sequence $\{G_n\}_{n=1}^\infty$ from $\FCb^1(E\to E^*)$ and a $C_{1,p}$-null set $N$ of $E$ such that $G_n$ converges to $G$ in $\D^{1,p}(E\to H)$ and $G_n(z)$ converges to $\tilde G(z)$ for all $z\in E\setminus N$, and $\sup\{G_n(z)\mid n\in\N,\ z\in E\}\vee \sup\{\tilde G(z)\mid z\in E\setminus N\}<\infty$.
Applying \Eq{bv3} to $G_n$ and letting $n\to\infty$, we obtain the conclusion.
\end{proof}

From Proposition~\ref{prop:finite}, the $H$-valued measure $\sg_A(z)\|A\|_E(dz)$ can be regarded as a kind of projective limit of the $H$-valued measures associated with finite-dimensional sections of $A$.
From the above fact and the structure theorem (Theorem~\ref{th:structure}), we can say that $\sg_A$ is described as the limit of normal vector fields on finite-dimensional sections of $A$. 
The determination of the validity of the infinite-dimensional version of the structure theorem is an open problem, which is stated below.
\begin{problem}
Does $\partial_\star A$ itself have an infinite-dimensional differential structure in a suitable sense, and can $\sg_A$ be interpreted as a normal vector field on $\partial_\star A$?
\end{problem}
If $A$ is given by the set $\{f>0\}$ for a nondegenerate function $f$ on $E$ that belongs to some suitable Sobolev space, then the answer is affirmative; see \cite{AM88,FL92,Fe01}.
In general, it does not seem that we can expect this type of a good expression for $A$.
Here, we present the typical examples under consideration.
Let $d\in\N$ and $(E,H,\mu)$ be the classical Wiener space on $\R^d$; in particular,
\begin{align*}
E&=\{w\in C([0,1]\to\R^d)\mid w(0)=0\},\\
H&=\left\{h\in E\left|\,
\mbox{$h$ is absolutely continuous and } \int_0^1 |\dot h(s)|_{\R^d}^2\,ds<\infty
\right.\!\right\},
\end{align*}
and $\mu$ is the law of the Brownian motion on $\R^d$ starting from $0$.
Let $\Om$ be a domain of $\R^d$ that includes $0$, and define 
\[
A=\{w\in E\mid w(t)\in\Om \mbox{ for all }t\in[0,1]\}.
\]
We say that $\Om$ satisfies the {\em uniform exterior ball condition} if there exists $\dl>0$ such that for every $y$ in the topological boundary of $\Om$ in $\R^d$, there exists $x\in\R^d\setminus \Om$ satisfying ${B}(x,\dl)\cap \overline{\Om}=\{y\}$, where ${B}(x,\dl)$ is the closed ball with center $x$ and radius $\dl$ and $\overline\Om$ is the closure of $\Om$.  
For example, bounded domains with boundaries in the $C^2$-class and convex domains satisfy this condition. 
Then, we have the following theorem.
\begin{thm}[{\cite[Theorem~5.1]{HU08}}]
Suppose $\Om$ satisfies the uniform exterior ball condition.
Then, $A$ is of finite perimeter.
\end{thm}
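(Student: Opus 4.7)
The plan is to verify $\bone_A \in BV(E)$ by smoothing the indicator through the minimum-distance-to-complement functional along the path. Put $d_\Om(x) := \mathrm{dist}(x, \R^d \setminus \Om)$ and define
\[
\Phi(w) := \inf_{t\in[0,1]} d_\Om(w(t)), \qquad w \in E.
\]
Since $\Om$ is open, $A = \{\Phi > 0\}$ and $E \setminus A = \{\Phi = 0\}$. The inequality $|h(t)|_{\R^d} \le \sqrt{t}\,|h|_H \le |h|_H$ yields $|\Phi(w+h) - \Phi(w)| \le |h|_H$ for every $h \in H$, so $\Phi$ admits an $H$-gradient $\nab\Phi$ with $|\nab\Phi|_H \le 1$ $\mu$-a.e., and in particular $\Phi \in \D^{1,p}(E)$ for every $p>1$.

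Fix a smooth nondecreasing $\psi \colon \R \to [0,1]$ with $\psi = 0$ on $(-\infty,0]$ and $\psi = 1$ on $[1, \infty)$, and set $f_n := \psi(n \Phi)$. Then $f_n \to \bone_A$ pointwise, each $f_n$ is Lipschitz along $H$ with $\nab f_n = n\psi'(n\Phi)\,\nab\Phi$, and $\psi'(n\Phi) = 0$ on $\{\Phi = 0\}$. Standard Malliavin integration by parts combined with Cauchy--Schwarz gives, for every $G \in \FCb^1(E \to E^*)$ with $|G|_H \le 1$,
\[
\left|\int_E (\nab^* G)\,f_n\,d\mu\right|
= \left|n \int_E \psi'(n\Phi) \la G, \nab\Phi\ra_H\,d\mu\right|
\le \|\psi'\|_\infty \cdot n\,\mu(0 < \Phi \le 1/n).
\]
Dominated convergence on the left ($\bone_A$ is bounded and $\nab^* G$ is smooth and bounded) then yields
\[
V_E(\bone_A) \le \|\psi'\|_\infty \cdot \liminf_{n \to \infty}\, n\,\mu(0 < \Phi \le 1/n),
\]
so the whole argument reduces to the linear-in-$\eps$ \emph{shell estimate}
\[
\mu(0 < \Phi \le \eps) \le C\eps \quad \text{for all small } \eps > 0, \qquad (\star)
\]
i.e., the $\mu$-measure of paths that stay inside $\Om$ but enter the $\eps$-tube about $\partial\Om$ is of order $\eps$.

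The main obstacle is $(\star)$, and this is precisely where the uniform exterior ball condition enters. The plan is to exploit the fact that at every $y \in \partial\Om$ there is a tangent exterior ball $B(x_y,\dl)$ of fixed radius $\dl$: if the Brownian path reaches a point $z$ with $d_\Om(z)=\eps$, then $|z - x_y| \le \dl + \eps$ for a suitable boundary point $y$, and the classical explicit hitting probability for a Brownian motion versus a sphere produces a uniformly positive probability (depending only on $d$ and $\dl$) of entering $\R^d\setminus\Om$ within a time window of order $\eps^2$. Combining this one-step estimate with a covering of a tubular neighborhood of $\partial\Om$ by such exterior balls, together with the It\^o--Tanaka decomposition of $d_\Om(w(\cdot))$ (which is available since UEBC forces $d_\Om$ to be semi-concave with constant controlled by $\dl$), yields $(\star)$. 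Once $(\star)$ is established, the displayed bound gives $V_E(\bone_A) \le C\|\psi'\|_\infty < \infty$; the integrability condition of Definition~\ref{def:BV} is trivial because $\bone_A$ is bounded and $\mu$ is a probability measure, and this completes the verification that $A$ has finite perimeter.
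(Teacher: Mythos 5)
First, note what you are being compared against: the paper contains \emph{no proof} of this statement --- it is quoted verbatim from \cite[Theorem~5.1]{HU08} --- so your argument has to stand entirely on its own. Its first half does stand. Since $\Om$ is open and $[0,1]$ is compact, $A=\{\Phi>0\}$; $\Phi$ is bounded and $H$-Lipschitz with constant $1$ (because $d_\Om$ is $1$-Lipschitz and $\sup_{t\in[0,1]}|h(t)|_{\R^d}\le|h|_H$), hence $\Phi\in\D^{1,p}(E)$ with $|\nab\Phi|_H\le1$ $\mu$-a.e.; the chain rule for $f_n=\psi(n\Phi)$, the duality $\int_E(\nab^*G)f_n\,d\mu=\int_E\la G,\nab f_n\ra_H\,d\mu$, and dominated convergence are all legitimate, and the integrability condition in Definition~\ref{def:BV} is trivial. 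This correctly reduces the theorem to the shell estimate $(\star)$, and $(\star)$ is indeed where the uniform exterior ball condition must do its work.

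The problem is that $(\star)$ \emph{is} the substance of the theorem, and your argument for it has two genuine gaps. (a) The input you invoke --- a uniformly positive probability of exiting $\Om$ within a time window of order $\eps^2$ --- cannot produce a bound linear in $\eps$: after first touching the $\eps$-shell the path may retreat to distance of order one from $\partial\Om$ and then survive until time $1$ with probability bounded below, and a per-window exit probability $p_0>0$ says nothing about how unlikely that retreat is. Linearity in $\eps$ requires a gambler's-ruin (scale-function/harmonic-measure) estimate: starting at distance $\eps$ from the exterior ball $B(x_y,\dl)$, the probability of escaping to distance comparable to $\dl$ before hitting $B(x_y,\dl)$ is $O(\eps/\dl)$; combined, say, with a comparison of the radial process $|w(\cdot)-x_y|$ with a one-dimensional Brownian motion with constant drift $(d-1)/(2\dl)$, this yields the correct one-step bound: if $d_\Om(z)=\eps$, the probability that Brownian motion started at $z$ does not leave $\Om$ within time $u$ is at most $C(u,\dl,d)\,\eps$. (b) More seriously, even granting that one-step bound, the strong Markov argument controls only first approach times $\tau$ with $1-\tau$ bounded away from $0$. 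If the path first enters the $\eps$-shell at a time $\tau$ close to $1$, there is no time left to force an exit before time $1$, and exiting \emph{after} time $1$ is perfectly consistent with $w\in A$, so no small factor is produced. This endpoint case needs a separate device (for instance, conditioning on $w(1/2)$ and $w(1)$ and using the time-reversal invariance of the Brownian bridge, or a killed-heat-kernel bound showing that the surviving path at a fixed time puts mass $O(\h)$ on the $\h$-neighborhood of $\partial\Om$). Your appeal to It\^o--Tanaka and semi-concavity of $d_\Om$ does not fill this hole: the exterior ball condition bounds the drift of $d_\Om(w(t))$ only from \emph{above} (the ridge set of $\Om$ pushes it down in an uncontrolled way), and the resulting one-sided comparison runs into exactly the same endpoint problem. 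Until $(\star)$ is proved in full, the proposal does not establish the theorem.
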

Further detailed properties are discussed in \cite{HU08} in a more general setting.
Sets of finite perimeter in the Wiener space appear in a natural manner as presented in \cite{HU08}, and in general, it seems difficult to treat such sets as level sets of smooth and nondegenerate functions.
 
\subsection{Remarks on measure-theoretic boundaries}
In general, $\partial_\star A$ is strictly smaller than $\partial A$.
A trivial example is a one point set.
It is natural to expect that $\partial_\star A$ coincides with $\partial A$ when $\partial A$ is smooth in a certain sense. We will state it as a problem as follows:
\begin{problem}
Provide sufficient conditions on $A$ such that $\partial_\star A=\partial A$. In particular, when $A$ is realized as $\{f>0\}$ for some function $f$ on $E$, what kind of condition on $f$ is sufficient to assure $\partial_\star A=\partial A$?
\end{problem}
As a partial answer, we will provide a simple sufficient condition at which $\partial_\star A=\partial A$ holds.
In the following discussion, $\{F_m\}_{m=1}^\infty$ is selected as in \Eq{Fm}.
For $A\subset E$, let $A^\circ$ and $\overline{A}$ denote the interior and the closure of $A$ in $E$, respectively.
\begin{prop}\label{prop:convex}
Suppose $A$ is a convex set of $E$ with $A^\circ\ne\emptyset$. 
Then, $\partial_\star A=\partial A$.
\end{prop}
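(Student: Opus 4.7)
The inclusion $\partial_\star A \subseteq \partial A$ is automatic, so the plan is to fix $z \in \partial A$ and show $z \in \partial_\star^{F_m} A$ for all sufficiently large $m$. Writing $y = q_{F_m}(z)$ and $x_* = p_{F_m}(z)$, since $A_y$ is convex in $F_m$ it suffices to show that $A_y$ has nonempty $F_m$-interior and that $x_*$ lies on the topological boundary of $A_y$ in $F_m$: indeed, for a convex set with nonempty interior in $\R^m$ the topological boundary coincides with the measure-theoretic boundary, as a supporting hyperplane gives positive density to the complement and a convex-hull cone with an interior point gives positive density to the set.

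For the boundary condition at $x_*$, apply geometric Hahn--Banach to the disjoint convex sets $A^\circ$ and $\{z\}$ to get $L \in E^* \setminus \{0\}$ with $L \le L(z)$ on $\overline{A}$. Since $L$ is a nonzero continuous functional on $E$ and $H$ is dense in $E$, the restriction $L|_H$ is nonzero; expanding along the orthonormal system $\{l_i\}$ then gives $L|_{F_m} \ne 0$ for all sufficiently large $m$. For such $m$, $A_y$ lies in the closed half-space $\{x \in F_m : L(x) \le L(x_*)\}$, whose boundary passes through $x_*$, so $x_* \notin A_y^\circ$.

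For the nonempty $F_m$-interior of $A_y$, the crucial step is to choose $z_0 \in A^\circ$ with $z_0 - z \in H$. Such a $z_0$ exists: if $A^\circ \cap (z + H) = \emptyset$, geometric Hahn--Banach would furnish a nonzero $L' \in E^*$ bounded on the affine subspace $z + H$, which forces $L'|_H = 0$ and hence $L' = 0$ on $E$ by density of $H$, a contradiction. Fix $z_0 = z + h$ with $h \in H$ and $\delta_0 > 0$ with $\{w \in E : |w - z_0|_E < \delta_0\} \subset A^\circ$. Since $h \in H$, $|q_{F_m}(h)|_H \to 0$, so the continuity of the embedding $H \hookrightarrow E$ gives $|q_{F_m}(h)|_E \to 0$. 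Take $m$ large enough that $|q_{F_m}(h)|_E < \delta_0/2$, and set $v_0 := p_{F_m}(z_0)$. The identity $v_0 + y - z_0 = -q_{F_m}(h)$ combined with the triangle inequality yields $v + y \in A$ for every $v \in F_m$ with $|v - v_0|_E < \delta_0/2$, so $v_0$ lies in the $F_m$-interior of $A_y$.

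Finally, for $\lambda \in (0,1]$ the convex combination $(1-\lambda) z + \lambda (v_0 + y)$ lies in $A^\circ$ (by the standard segment lemma: an open segment from a point of $\overline{A}$ to a point of $A^\circ$ lies in $A^\circ$), and it sits on the affine subspace $F_m + y$, so its $F_m$-component $p_\lambda := (1-\lambda) x_* + \lambda v_0$ belongs to $A_y^\circ$ and converges to $x_*$ as $\lambda \downarrow 0$. Hence $x_* \in \overline{A_y^\circ}$; combined with $x_* \notin A_y^\circ$, this places $x_*$ on the topological boundary of the convex set $A_y$, completing the verification. The main technical difficulty lies in establishing $A^\circ \cap (z + H) \ne \emptyset$ via Hahn--Banach: without a displacement vector in $H$, the quantity $|q_{F_m}(z_0 - z)|_E$ need not tend to zero as $m \to \infty$, and one cannot transfer the ambient $E$-interior of $A$ into an $F_m$-interior point of the finite-dimensional section $A_y$ by projection.
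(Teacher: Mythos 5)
Your proof is correct, and its skeleton is the same as the paper's: fix $z\in\partial A$, pass to the finite-dimensional sections $A_{q_{F_m}(z)}$, and use convexity to show that $p_{F_m}(z)$ lies in the measure-theoretic boundary of the section for every sufficiently large $m$, hence $z\in\liminf_{m\to\infty}\partial_\star^{F_m}A=\partial_\star A$. The differences are in how the three key steps are carried out. First, to plant an interior point in the section, the paper notes that $F_\infty=\bigcup_{m}F_m$ is dense in $E$ and takes $z_0\in A^\circ\cap(z+F_{m_0})$; then for every $m\ge m_0$ the displacement $z_0-z$ lies in $F_m$ itself, and no projection is needed. Your choice of a displacement $h\in H$ creates the need for the tail estimate $|q_{F_m}(h)|_H\to0$ and the triangle-inequality transfer; what you call the ``main technical difficulty'' is thus self-inflicted and disappears with the sharper choice of $z_0$. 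Likewise, your Hahn--Banach argument for $A^\circ\cap(z+H)\ne\emptyset$ is overkill: $z+H$ (indeed $z+F_\infty$) is dense in $E$, so it automatically meets the nonempty open set $A^\circ$. Second, where you show that $p_{F_m}(z)$ is a topological boundary point of the section by combining a separating functional $L$ (for ``not interior'') with the segment lemma (for ``in the closure''), the paper isolates this step as Lemma~\ref{lem:convex}, proved by dilation and segment arguments with no Hahn--Banach at all; both are sound, and your functional $L$ has the small bonus of directly exhibiting a half-space inside the complement of the section. Third, for the finite-dimensional fact that the topological and measure-theoretic boundaries of a convex body coincide, the paper cites Grisvard (convex sets have Lipschitz boundaries), while you give a short self-contained density argument (supporting half-space for the complement, cone over an interior ball for the set). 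Overall, your route is entirely elementary and self-contained but longer; the paper's is leaner at the price of an external citation and a dedicated lemma.
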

For the proof, we state a basic result from convex analysis.
Let $G$ be a finite-dimensional affine space of $E$. 
For $C\subset G$, let $C^{\circ G}$, $\overline C^G$, and $\partial^G C$ be the interior, the closure, and the boundary of $C$ with respect to the relative topology of $G$, respectively.
\begin{lem}\label{lem:convex}
   Let $A$ be a convex set of $E$.  
If $A^\circ \cap G\ne \emptyset$, then $A^\circ\cap G=(A\cap G)^{\circ G}$, $\overline A\cap G=\overline{A\cap G}^G$, and $(\partial A)\cap G=\partial^G(A \cap G)$.
\end{lem}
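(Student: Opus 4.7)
The plan is to prove the three equalities in order, treating the first two via convex-combination arguments rooted in the hypothesis $A^\circ \cap G \ne \emptyset$, and then deriving the third as a formal consequence. Throughout, fix a base point $z_0 \in A^\circ \cap G$.

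For the first equality $A^\circ \cap G = (A\cap G)^{\circ G}$, the inclusion ``$\subseteq$'' is immediate: if $z \in A^\circ \cap G$, then an open $E$-ball around $z$ contained in $A$ intersects $G$ in a relatively open neighborhood of $z$ contained in $A \cap G$. For ``$\supseteq$'', I would take $z \in (A\cap G)^{\circ G}$ and assume $z \ne z_0$. Since the point $z' := z + \eps(z - z_0)$ belongs to $G$ and, for all sufficiently small $\eps > 0$, also belongs to $A \cap G$ (by relative openness), we obtain the convex representation $z = \tfrac{1}{1+\eps}z' + \tfrac{\eps}{1+\eps}z_0$. Because $z_0 \in A^\circ$, there is an open $E$-ball $U \subset A$ around $z_0$, and the convex-combination image $\tfrac{1}{1+\eps}\{z'\} + \tfrac{\eps}{1+\eps} U$ is an open $E$-neighborhood of $z$ contained in $A$ by convexity; hence $z \in A^\circ$.

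For the second equality $\overline A \cap G = \overline{A\cap G}^G$, the inclusion ``$\supseteq$'' is trivial because $G$ is closed in $E$ (it is a finite-dimensional affine subspace) and $\overline{A\cap G} \subseteq \overline A$. For ``$\subseteq$'', take $z \in \overline A \cap G$ and consider $z_t := (1-t)z + t z_0 \in G$ for $t \in (0,1]$. A standard fact from convex analysis states that whenever $z_0 \in A^\circ$ and $z \in \overline A$, the half-open segment $\{(1-t)z + tz_0 : t \in (0,1]\}$ is contained in $A^\circ$ (same convex-combination argument as above, opening a ball around $z_0$ and taking its image under the combination). In particular $z_t \in A \cap G$ for every $t \in (0,1]$, and $z_t \to z$ as $t \downarrow 0$, so $z \in \overline{A\cap G}^G$.

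The third equality follows by straightforward set algebra from the first two:
\[
(\partial A) \cap G = (\overline A \setminus A^\circ) \cap G = (\overline A \cap G) \setminus (A^\circ \cap G) = \overline{A\cap G}^G \setminus (A\cap G)^{\circ G} = \partial^G(A \cap G).
\]
The main obstacle is the nontrivial inclusion in the first equality, since a priori a relatively $G$-interior point of $A \cap G$ could fail to have an open $E$-neighborhood in $A$; the convex-combination trick that pulls in the interior ball around $z_0$ is what makes this work, and the same idea underlies the second equality. Everything else is formal.
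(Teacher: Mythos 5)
Your proposal is correct and follows essentially the same route as the paper: the dilation trick $z' = (1+\eps)z - \eps z_0$ combined with the convex-combination image of a ball $U\subset A$ around the base point for the interior equality, the line-segment principle toward the interior point for the closure equality, and the boundary statement as a formal consequence of the first two. The only cosmetic differences are that the paper sweeps the whole segment family $\bigcup_{t\in(0,1]}\bigl((1-t)x + t(U\cap G)\bigr)$ rather than a single segment, and leaves the final set-algebra step implicit.
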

\begin{proof}
Consider $y\in A^\circ \cap G$. We can choose an open ball $U$ with center $y$ that is included in $A^\circ$.

First, we prove $A^\circ\cap G\supset (A\cap G)^\circ$.
Consider $x\in (A\cap G)^\circ$.
There exists $s>0$ such that $w:=(1+s)x-sy\in (A\cap G)^\circ$.
Since $\frac1{1+s}w+\frac{s}{1+s}U$ is an open ball that includes $x$ and is included in $A$, we conclude that $x\in A^\circ$.
Since $x$ clearly belongs to $G$, we conclude that $x\in A^\circ\cap G$.

Next, we prove $\overline A\cap G\subset \overline{A\cap G}^G$.
Consider $x\in \overline A\cap G$. Then,
\[
\bigcup_{t\in(0,1]}\left((1-t)x+t(U\cap G)\right)\subset (A\cap G)^{\circ G},
\]
and $x$ is an accumulation point on the left-hand side; therefore, we have $x\in \overline{A\cap G}^G$.

Both the converse inclusions are obvious.
The last equality in the claim follows from the first two equalities.
\end{proof}
\begin{proof}[Proof of Proposition~\ref{prop:convex}]
It is sufficient to prove $\partial_\star A\supset\partial A$.
Let $F_\infty=\bigcup_{m=1}^\infty F_m$, which is a dense subspace of $E$.
Consider $z\in\partial A$.
By the assumption $A^\circ\ne\emptyset$, we have $A^\circ \cap(z+F_\infty)\ne\emptyset$.
Therefore, for sufficiently large $m$, $A^\circ \cap(z+F_m)\ne\emptyset$.
Denote $z+F_m$ by $G$.
From Lemma~\ref{lem:convex}, $z\in (\partial A)\cap G=\partial^G(A\cap G)$.
Since $A\cap G$ is a convex set, it has a Lipschitz boundary in $G$. (For the proof, see, e.g., \cite[Corollary~1.2.2.3]{Gr}.)
This implies $\partial^G(A\cap G)=(\partial_\star^{F_m}A)\cap G$, therefore $z\in \partial_\star^{F_m}A$.
Thus, $z$ belongs to $\partial_\star A$.
\end{proof}
{\em Note added in proof\/}: Two recent papers~\cite{AMMP1,AMMP2} that are closely relevant to this article were added in the references.


\begin{thebibliography}{99}

\bibitem{AM88} 
Airault, H. and Malliavin, P.,
Int\'egration g\'eom\'etrique sur l'espace de Wiener,
\textit{Bull. Sci. Math.} (2),
\textbf{112} (1988), 3--52. 

\bibitem{Bo90} 
Bogachev, V. I.,
Smooth measures, the Malliavin calculus and approximations in infinite-dimensional spaces, 18th Winter School on Abstract Analysis (Srn\'\i, 1990),
\textit{Acta Univ. Carolin. Math. Phys.}, \textbf{31} (1990), 9--23.

\bibitem{EG} Evans, L. C. and Gariepy, R. F.,
\textit{Measure theory and fine properties of functions}, 
CRC Press, Boca Raton, 1992.

\bibitem{Fed} 
Federer, H.,
\textit{Geometric measure theory}, 
Springer, New York, 1969.

\bibitem{Fe01}
Feyel, D.,
Hausdorff--Gauss measures,
in \textit{Stochastic analysis and related topics, VII (Kusadasi, 1998)}, 59--76, Progr. Probab., \textbf{48}, Birkh\"auser Boston, Boston, MA, 2001.

\bibitem{FL92} 
Feyel, D. and de La Pradelle, A.,
Hausdorff measures on the Wiener space, 
\textit{Potential Anal.}, 
\textbf{1} (1992), 177--189.

\bibitem{Fu99}
Fukushima, M.,
On semimartingale characterizations of functionals of symmetric Markov processes, 
\textit{Electron. J. Probab.},
\textbf{4} (1999), paper no. 18, 1--32.

\bibitem{Fu00a}
Fukushima, M., 
$BV$ functions and distorted Ornstein Uhlenbeck processes over the abstract Wiener space, 
\textit{J. Funct. Anal.},
\textbf{174} (2000), 227--249.

\bibitem{FH01}
Fukushima, M. and Hino, M.,
On the space of BV functions and a related stochastic calculus in infinite dimensions,
\textit{J. Funct. Anal.}, 
\textbf{183} (2001), 245--268.

\bibitem{Gi} 
Giusti, E.,
\textit{Minimal surfaces and functions of bounded variation}, 
Monographs in Mathematics \textbf{80}, 
Birkh\"auser, 1984. 

\bibitem{Go72} 
Goodman, V.,
A divergence theorem for Hilbert space, 
\textit{Trans. Amer. Math. Soc.},
\textbf{164} (1972), 411--426.

\bibitem{Gr}
Grisvard, P.,
\textit{Elliptic problems in nonsmooth domains}, 
Monographs and Studies in Mathematics \textbf{24}, Pitman, 1985.

\bibitem{He80} 
Hertle, A.,
Gaussian surface measures and the Radon transform on separable Banach spaces, 
in \textit{Measure theory}, 513--531, Oberwolfach 1979 (Proc. Conf., Oberwolfach, 1979), Lecture Notes in Math. \textbf{794}, Springer, Berlin, 1980.

\bibitem{Hi04}
Hino, M.,
Integral representation of linear functionals on vector lattices and its application to BV functions on Wiener space, 
in \textit{Stochastic Analysis and Related Topics in Kyoto in honour of Kiyosi It\^{o}}, 121--140, 
Advanced Studies in Pure Mathematics, \textbf{41}, 2004.

\bibitem{HU08}
Hino, M. and Uchida, H.,
Reflecting Ornstein--Uhlenbeck processes on pinned path spaces, 
in \textit{Proceedings of RIMS Workshop on Stochastic Analysis and Applications}, 111--128, RIMS K\^oky\^uroku Bessatsu, B6, Res. Inst. Math. Sci. (RIMS), Kyoto, 2008.

\bibitem{Ku}
Kuo, H.-H.,
\textit{Gaussian measures in Banach spaces}, Lecture Notes in Math. \textbf{463}, Springer-Verlag, Berlin-New York, 1975.

\bibitem{Sk} 
Skorohod, A. V.,
\textit{Integration in Hilbert space}, 
Translated from the Russian by Kenneth Wickwire,  Ergebnisse der Mathematik und ihrer Grenzgebiete \textbf{79}, 
Springer-Verlag, New York-Heidelberg, 1974.

\bibitem{AMMP1}
Ambrosio, L., Miranda, M., Jr., Maniglia, S., and Pallara, D.,
Towards a theory of $BV$ functions in abstract Wiener spaces,
\textit{Physica D},
to appear.

\bibitem{AMMP2}
Ambrosio, L., Miranda, M., Jr., Maniglia, S., and Pallara, D.,
$BV$ functions in abstract Wiener spaces,
preprint, 2009.
\end{thebibliography}
\end{document}